\documentclass{amsart}
\linespread{1.15}

\usepackage{amsmath, amssymb, mathrsfs, verbatim, multirow}

\theoremstyle{definition}
\newtheorem{Teo}{Theorem}[section]
\newtheorem{Def}[Teo]{Definition}

\newtheorem{Obs}[Teo]{Remark}
\newtheorem{Lema}[Teo]{Lemma}
\newtheorem{Cor}[Teo]{Corollary}

\newcommand{\N}{\mathbb{N}}

\newcommand{\Llr}{\Longleftrightarrow}
\newcommand{\lra}{\longrightarrow}

\newcommand{\VR}{\mathcal{O}}
\newcommand{\PI}{\mathfrak{p}}
\newcommand{\MI}{\mathfrak{m}}

\begin{document}
\title{ Reduction of local uniformization to the rank one case}

\date{}

\author{Josnei Novacoski}
\address{Department of Mathematics and Statistics,
University of Saskatchewan,
Saskatoon, \newline \indent
SK S7N 5E6, Canada}
\email{jan328@mail.usask.ca}

\author{Mark Spivakovsky}

\keywords{Local Uniformization, rank one valuations, valuations centered at a local ring}
\subjclass[2010]{Primary 13A18 Secondary 14H20, 13H05}

\begin{abstract}
The main result of this paper is that in order to prove the local uniformization theorem for local rings it is enough to prove it for rank one valuations. Our proof does not depend on the nature of the class of local rings for which we want to prove local uniformization. We prove also the reductions for different versions of the local uniformization theorem.
\end{abstract}
\maketitle

\section{Introduction}

Zariski's idea to solve the resolution of singularities problem for an algebraic variety was to prove local uniformization for all valuations and use the quasi-compactness of the Zariski space to glue the solutions together and construct a global resolution. In \cite{Zar_2} he achieved local uniformization for valuations of any field of characteristic zero. Then in \cite{Zar_3} he achieved the glueing of these solutions for particular cases. In 1964, Hironaka proved in his celebrated paper \cite{Hir_1}, that resolution of singularities can be obtained for algebraic varieties over a field of characteristic zero, without using the local approach via valuation theory.

Since then many results were obtained towards resolution of singularities and local uniformization in the positive characteristic case, but local uniformization in arbitrary dimension and characteristic remains an open problem. Abhyankar proved in \cite{Ab_1} that local uniformization can be done for algebraic surfaces in any characteristic and used this fact to prove resolution of singularities for surfaces (see \cite{Ab_2} and \cite{Ab_3}). He also proved local uniformization (and resolution of singularities) for threefolds over fields of characteristic other than 2, 3 and 5 \cite{Ab_4}. Cossart and Piltant proved in \cite{Cos_2} resolution of singularities (and in particular, local uniformization) for algebraic varieties of dimension three. Another important case is the one proved by Knaf and F.-V. Kuhlmann in \cite{Kna}, namely that all Abhyankar valuations admit local uniformization.

Another approach to local uniformization is by alteration of the algebraic function field of the given variety. In this sense, De Jong proved in \cite{Dej} that resolution of singularities can be obtained after a finite extension of the function field. Knaf and F.-V. Kuhlmann refined in \cite{Kna_1} this result for local uniformization, proving that this extension can be taken as being separable or a Galois extension. Also, Temkin proved in \cite{Tem} that this extension can be chosen as being purely inseparable.

To prove local uniformization it is convenient to work with rank one valuations. This is because, for instance, complete valued fields of rank one are henselian but this is not true, in general, for higher rank valuations. A natural way to handle local uniformization is to reduce the problem to rank one valuations. In this paper, we prove this reduction. Our main results are Theorems \ref{Teo_1}, \ref{Teo_2} and \ref{Teo_3} below.

Roughly speaking, the local uniformization problem asks whether for every valuation $\nu:K^\times\lra \Gamma$ centered at $\left(R,\MI\right)$ we can find another local ring $\left(R^{(1)},\MI^{(1)}\right)$, birationally dominating $\left(R,\MI\right)$, with $R^{(1)}\subseteq \VR_\nu\subseteq K=Quot(R)$ such that $R^{(1)}$ is regular. Precise definitions of the three different types of local uniformization that we consider are given in \S2.3 below.
Let $\mathcal{N}$ be the category of all Noetherian local rings and let $\mathcal{M}\subseteq\mathcal{N}$ be a subcategory of $\mathcal{N}$ which is closed under taking homomorphic images, finitely generated birational extensions and localizations. Grothendieck conjectured that the subcategory which optimizes local uniformization and resolution of singularities is the category of all quasi-excellent local rings. Our proofs do not depend of the nature of the category $\mathcal{M}$.

\begin{Teo}\label{Teo_1}
Assume that for every Noetherian local ring $(R,\MI)$ in $\mathcal{M}$, every rank one valuation centered in $(R,\MI)$ admits local uniformization. Then all the valuations centered in members of $\mathcal{M}$ admit local uniformization.
\end{Teo}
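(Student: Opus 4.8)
The plan is to induct on the rank $r$ of $\nu$; the case $r=1$ is exactly the hypothesis. Assume $r\geq 2$ and that the statement holds for all valuations of rank $<r$ centered in members of $\mathcal M$. Let $\nu\colon K^\times\lra\Gamma$ be centered at $(R,\MI)\in\mathcal M$, let $\Delta$ be the smallest nontrivial (hence rank one) convex subgroup of $\Gamma$, and write $w:=\nu/\Delta$ for the associated coarsening, a valuation of $K$ of rank $r-1$ with $\VR_\nu\subseteq\VR_w$. On the residue field $k_w:=\VR_w/\MI_w$ the valuation $\nu$ induces a rank one valuation $\bar\nu$ with value group $\Delta$, and $\VR_\nu$ is the preimage of $\VR_{\bar\nu}$ under $\VR_w\twoheadrightarrow k_w$. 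The single most useful elementary fact is that a local domain with fraction field $K$ which is dominated by a valuation ring of $K$ is automatically contained in it; in particular, once we know that $\nu$ is centered at a local ring $(S,\MI_S)$, we get $S\subseteq\VR_\nu$ for free, and the task reduces to producing a regular $(S,\MI_S)$ birationally dominating $(R,\MI)$ at which $\nu$ is centered.

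The temptation is to uniformize $w$ by the inductive hypothesis and $\bar\nu$ by the hypothesis and somehow combine the results; this is not straightforward, and the obstructions are instructive. The center $\PI:=\MI_w\cap R$ of $w$ need only satisfy $\PI\subseteq\MI$, and when $\PI\subsetneq\MI$ the ring $\VR_w$ cannot dominate any local ring that birationally dominates $(R,\MI)$, so a uniformization of $w$ at its center $R_\PI$ lands in $\VR_w$ but outside $\VR_\nu$; moreover even when $\PI=\MI$, a uniformization $R'$ of $w$ has a residue field which may carry a nontrivial $\bar\nu$, and then again $R'\not\subseteq\VR_\nu$. The correct procedure therefore interleaves the two uniformizations. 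First, $\bar\nu$ restricts to a nontrivial rank one valuation of $\mathrm{Quot}(R/\PI)$ centered at $(R/\PI,\MI/\PI)\in\mathcal M$ (using that $\mathcal M$ is closed under homomorphic images); uniformize it by the hypothesis, lift a finite set of generators of the resulting regular ring to $R_\PI\subseteq\VR_w$, adjoin them to $R$ — together with blow-ups of $\MI$-primary ideals, passing each time to the chart on which $\nu$ has a center so as to remain inside $\VR_\nu$ — and localize at the center of $\nu$. This produces $R_1\in\mathcal M$, birationally dominating $(R,\MI)$, with $R\subseteq R_1\subseteq\VR_\nu$ and $\nu$ centered at $R_1$, whose quotient $R_1/(\MI_w\cap R_1)$ is regular. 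One then uniformizes $w$ — now of rank $r-1$, centered at $(R_1)_{\MI_w\cap R_1}\in\mathcal M$ — by the inductive hypothesis, lifts it back by the same device to get $R_2$ with $(R_2)_{\MI_w\cap R_2}$ regular, then uniformizes $\bar\nu$ again on the new quotient, and so on.

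The heart of the matter, and where I expect the real work to lie, is proving that this alternating procedure terminates with a ring $R_k$ that is itself regular: at each stage we only control the localization of $R_k$ at the center $\PI_k$ of $w$ and the quotient $R_k/\PI_k$, and a local domain can fail to be regular even when both of these are regular. One must therefore exhibit an invariant that strictly drops along the procedure — for example one assembled from the Krull dimension of the quotient $R_k/\PI_k$ together with a resolution-type invariant of the affine residue ring, or else arrange the lifts so that no singularity is created in the quotient in the first place. Granting this, $R_k$ is the sought-after local uniformization of $\nu$, and the induction is complete. The same scheme adapts to the embedded and strong versions of the theorem: one carries the additional data (the transform of a fixed ideal, the exceptional divisor) through every blow-up and every lift, invoking at each point the appropriate version of the rank one hypothesis and of the lower-rank inductive assumption.
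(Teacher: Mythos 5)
Your overall skeleton matches the paper's: induct on rank, split $\nu$ into a coarsening $w=\nu_1$ centered at $R_\PI$ and a residual valuation $\bar\nu=\nu_2$ centered at $R/\PI$, uniformize each piece by the appropriate hypothesis, and lift the resulting blow-up sequences back to blow-ups of $R$ with respect to $\nu$. But there are two problems. First, the ``alternating procedure'' you propose is unnecessary, and your worry that uniformizing one piece destroys the other is resolved in the paper by the lifting lemmas themselves: the lift of a blow-up of $R/\PI$ with respect to $\nu_2$ (a blow-up of an ideal generated by elements of $R\setminus\PI$) satisfies $R^{(1)}_{\PI^{(1)}}=R_\PI$ \emph{exactly} (Lemma \ref{Le_7}), so after first making $R_\PI$ regular (Corollary \ref{Cor_2}) and then making $R/\PI$ regular (Corollary \ref{Cor_1}), both properties hold simultaneously after two passes; no alternation and no decreasing invariant are needed.

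Second, and more seriously, you explicitly concede the crux: ``a local domain can fail to be regular even when both $R_\PI$ and $R/\PI$ are regular,'' and you only say that ``one must exhibit an invariant that strictly drops'' and then write ``granting this.'' That step is the actual content of the proof and cannot be waved through. The paper closes it as follows. Choose $y_1,\ldots,y_r\in\PI$ forming a regular system of parameters of $R_\PI$ and $x_1,\ldots,x_t$ whose images form one of $R/\PI$. If $y_1,\ldots,y_r$ generate $\PI$, a height count $\dim R\geq ht(\PI)+ht(\MI/\PI)=r+t\geq\dim R$ shows $R$ is already regular. Otherwise, each extra generator $y_{r+k}$ of $\PI$ satisfies a relation $a_ky_{r+k}+b_{1k}y_1+\cdots+b_{rk}y_r-h_k=0$ with $a_k\in R\setminus\PI$ and $h_k\in(y_1,\ldots,y_r)^2$, because $y_1,\ldots,y_r$ already generate $\PI R_\PI$. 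Blowing up along $(a_k,y_1,\ldots,y_r,\ldots,y_{r+s})$ makes $a_k$ divide the relation, so that $y_{r+k}$ becomes redundant as a generator of the new center $\PI^{(1)}$, while Lemma \ref{Le_6} guarantees that these particular blow-ups preserve the regularity of both the localization and the quotient together with their regular systems of parameters. After $s$ such steps $\PI^{(s)}$ is generated by $r$ elements and the height count applies. Without this explicit generator-elimination argument (or a genuine substitute for it), your proof is incomplete.
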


A stronger version of local uniformization, called weak embedded local uniformization problem, is whether for every given finite subset $Z$ of $R$ we can find a regular local ring $(R^{(1)},\MI^{(1)})$, birationally dominating $(R,\MI)$ and dominated by $\VR_\nu$, and a regular system of parameters $u=(u_1,\ldots,u_d)$ of $R^{(1)}$ such that all elements of $Z$ are monomials in $u$.

\begin{Teo}\label{Teo_2}
Assume that for every Noetherian local ring $(R,\MI)$ in $\mathcal{M}$, every rank one valuation centered in $(R,\MI)$
 admits weak embedded local uniformization. Then all the valuations centered in members of $\mathcal{M}$ admit weak embedded local uniformization.
\end{Teo}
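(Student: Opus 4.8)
The plan is to prove Theorem~\ref{Teo_2} by induction on the rank $n$ of $\nu$, following the architecture of the proof of Theorem~\ref{Teo_1} but carrying the finite set $Z$ along at every stage. For $n=1$ there is nothing to prove. So assume $n\geq 2$ and that the statement holds for all valuations of rank $<n$ centered at members of $\mathcal M$. Let $\nu\colon K^\times\to\Gamma$ be of rank $n$, centered at $(R,\MI)\in\mathcal M$, and let $Z\subseteq R$ be finite. Let $\Delta\subsetneq\Gamma$ be the largest proper convex subgroup and decompose $\nu$ through $\Delta$: this yields a rank one valuation $w=\nu/\Delta$ on $K$ and a rank $n-1$ valuation $\nu'$ on the residue field $k_w$ of $\VR_w$, with $\VR_\nu=\pi^{-1}(\VR_{\nu'})$ for the residue map $\pi\colon\VR_w\to k_w$. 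Put $\mathfrak{p}:=\MI_w\cap R$, the center of $w$ on $R$; then $(R_{\mathfrak{p}},\mathfrak{p}R_{\mathfrak{p}})\in\mathcal M$ and $w$ is a rank one valuation centered there.

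The first move is to uniformize $w$. Applying the hypothesis to $w$ over $(R_{\mathfrak{p}},\mathfrak{p}R_{\mathfrak{p}})$ with the finite set $Z$ gives a regular local ring $T$, birational over $R_{\mathfrak{p}}$ and dominated by $\VR_w$, with a regular system of parameters $u'$ in which every element of $Z$ is a monomial. I would then transport $T$ down to $\VR_\nu$: writing $T$ as the localization of a blow-up $\mathrm{Bl}_I(\operatorname{Spec}R_{\mathfrak{p}})$ at the center of $w$, and clearing denominators to replace $I$ by an ideal $I'\subseteq R$ with $I'R_{\mathfrak{p}}=I$, the local ring of $\mathrm{Bl}_{I'}(\operatorname{Spec}R)$ at the center of $\nu$ is a member $\widehat R$ of $\mathcal M$ with $R\leq\widehat R\leq\VR_\nu$, dominating $R$, and $\widehat R_{\widehat{\mathfrak{p}}}=T$, where $\widehat{\mathfrak{p}}=\MI_w\cap\widehat R$ is the center of $w$ on $\widehat R$ (the localizations match because $R\setminus\mathfrak{p}$, respectively $\widehat R\setminus\widehat{\mathfrak{p}}$, consists of $w$-units, which are already invertible at the center of $w$). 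Note that $\widehat R$ need not yet be regular; what matters is that $\widehat R_{\widehat{\mathfrak{p}}}=T$ is, and that the residue field $L:=\kappa(\widehat{\mathfrak{p}})=T/\MI_T$ — a possibly transcendental extension of $\kappa(\mathfrak{p})$ — has been built in at this point.

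The second move is to uniformize $\nu'$, over the correct base $B:=\widehat R/\widehat{\mathfrak{p}}$. This $B$ is a homomorphic image of $\widehat R$, hence lies in $\mathcal M$; its fraction field is $L$; and because $\widehat R\subseteq\VR_\nu$ one has $B\subseteq\VR_{\nu'}$, so $B$ is a proper subring of $L$ — in particular not a field — and $\nu'|_L$ is a valuation of rank $n-1$ centered at $B$. Apply the inductive hypothesis to $\nu'|_L$ over $B$ with a suitable finite set (the images in $B$ of $Z$, the $u'$-``remainders'' recording the monomial expressions obtained in the first move, and whatever auxiliary ratios the monomial gluing below requires): this produces a regular local ring $\overline S$, birational over $B$ and dominated by the valuation ring of $\nu'|_L$, with a regular system of parameters $\overline v$ in which all of these are monomials. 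Lifting the birational generators of $\overline S$ to elements of $\VR_\nu$ through the surjection $\pi|_{\VR_\nu}\colon\VR_\nu\twoheadrightarrow\VR_{\nu'}$ and localizing at the center of $\nu$ gives $R''\in\mathcal M$ with $R\leq R''\leq\VR_\nu$, dominating $R$, such that $R''/\mathfrak{p}''\cong\overline S$ while $R''_{\mathfrak{p}''}=T$ remains regular (the lifted generators are units in $R''_{\mathfrak{p}''}$, so that localization is unchanged). Since $\mathfrak{p}''$ is generated by the pullback of the regular sequence $u'$ and $R''/\mathfrak{p}''$ is regular, the standard criterion gives that $R''$ is regular, with regular system of parameters the concatenation of the pullback of $u'$ and a lift of $\overline v$.

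The main obstacle, and the only genuinely new work beyond Theorem~\ref{Teo_1}, is the monomial bookkeeping in this gluing. After the two moves, each $z\in Z$ presents itself as a sum of two monomials lying in disjoint blocks — one in the pulled-back $u'$, one in the lifted $\overline v$ — of the regular system of parameters of $R''$, whereas weak embedded local uniformization demands a single monomial. This is brought into line by the customary procedure: one enlarges in advance the distinguished finite sets fed to the two uniformizations so as to include, besides $Z$, the pairwise ratios and differences of the relevant monomials together with the lifts and remainders above; then finitely many further blow-ups of $R''$ — performed, again in charts compatible with both $\nu$ and $w$ so as not to disturb the regularity already achieved in either direction — arrange that one of the two monomials divides the other, and a short check shows that the surviving cofactor, of the form $1+(\text{element of positive value})$, is a unit of $R''$. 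Once every element of $Z$ is a monomial in a regular system of parameters of $R''$, with $R\leq R''\leq\VR_\nu$ and $R''$ dominating $R$, the ring $R''$ witnesses weak embedded local uniformization for $\nu$ and $Z$, completing the induction.
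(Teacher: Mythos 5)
Your architecture does match the paper's: decompose $\nu$, uniformize the rank one piece over $R_{\mathfrak p}$ carrying $Z$, descend to blow-ups of $R$ itself, uniformize the residual valuation over $R/\mathfrak p$ carrying the residues of the unit cofactors, and lift. But there are genuine gaps at exactly the two points where the real work lies. The first is the regularity of the final ring. You assert that ``$\mathfrak p''$ is generated by the pullback of the regular sequence $u'$'' and invoke the standard criterion. This is unjustified and false in general: $u'$ generates $\mathfrak p'' R''_{\mathfrak p''}$, but as an ideal of $R''$ the center of the rank one piece may need extra generators $y_{r+1},\dots,y_{r+s}$. Eliminating them is a substantial part of the proof: one writes relations $a_k y_{r+k}+b_{1k}y_1+\cdots+b_{rk}y_r-h_k=0$ with $a_k\notin\mathfrak p$ and $h_k\in\mathfrak p^2$, feeds the residues of the $a_k$ (in addition to the residues of the cofactors of the elements of $Z$) into the uniformization of the residual valuation so that each $a_k$ becomes a monomial in the downstairs parameters modulo $\mathfrak p$, and then blows up along ideals of the form $(x_l,y_1,\dots,y_{r+s})$ until each relation forces $y_{r+k}\in(y_1,\dots,y_r)$. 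Your proposal never mentions the extra generators of the center or these relations, so the regularity of $R''$ is not established.

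The second gap is the monomial gluing. The obstruction is not that each $z\in Z$ is ``a sum of two monomials in disjoint blocks.'' After the two uniformizations one has $f_i=\left(u_ix^{\delta_i}+r_i\right)y^{\gamma_i}$, where $u_i$ is a unit, $x^{\delta_i}$ is a monomial in the downstairs parameters, $y^{\gamma_i}$ is a monomial in the upstairs parameters, and $r_i\in\mathfrak p$ is an \emph{arbitrary} element of the center, not a monomial. The fix is not ``make one monomial divide the other'' but a specific iteration: blowing up along $(x_l,y_1,\dots,y_{r+s})$ replaces each $y_j$ by $x_ly_j^{(1)}$ and hence makes $r_i$ divisible by $x_l$, so a factor $x_l$ can be cancelled from $u_ix^{\delta_i}+r_i$ and absorbed into the exceptional power of $x_l$ multiplying $\left(y^{(1)}\right)^{\gamma_i}$; after finitely many steps the cofactor becomes $u_i+r_i^{(n)}$ with $r_i^{(n)}$ in the center, hence a unit. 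You correctly guess the final shape of the cofactor, but the mechanism producing it is absent, and because the intermediate form is misdescribed, the ``customary procedure'' you appeal to does not apply as stated.
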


We order the elements of the set $Z$ above by their values, i.e., $Z=\{f_1,\ldots,f_q\}$ such that $\nu(f_1)\leq\ldots\leq\nu(f_q)$. Another version of the local uniformization problem is whether we can find a regular local ring with regular system of parameters $u$ as before such that the elements $f_i$ are monomials in $u$ and moreover $f_1\mid_{R^{(1)}}\ldots\mid_{R^{(1)}} f_q$. This version is called embedded local uniformization.

\begin{Teo}\label{Teo_3}
Assume that for every Noetherian local ring $(R,\MI)$ in $\mathcal{M}$, every rank one valuation centered in $(R,\MI)$
 admits embedded local lniformization. Then all the valuations centered in members of $\mathcal{M}$ admit embedded local uniformization.
\end{Teo}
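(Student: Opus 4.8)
\emph{Proof proposal.} I would argue by induction on the rank $r$ of $\nu$. If $r=1$ there is nothing to prove, this being precisely the hypothesis. So let $r\geq 2$, assume the statement for all valuations of rank $<r$ centered at members of $\mathcal{M}$, let $\nu$ (with value group $\Gamma$) be centered at $(R,\MI)\in\mathcal{M}$, and let $Z=\{f_1,\ldots,f_q\}\subseteq R$ with $\nu(f_1)\leq\cdots\leq\nu(f_q)$.

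Let $\Delta\subsetneq\Gamma$ be the largest proper convex subgroup and let $\nu'$ be the corresponding coarsening of $\nu$; then $\nu'$ is a rank one valuation of $K$, the valuation $\bar\nu$ induced by $\nu$ on $k':=K\nu'$ has value group $\Delta$ and rank $r-1$, and $\VR_\nu$ is the preimage of $\VR_{\bar\nu}$ under the residue map $\VR_{\nu'}\to k'$, so $\VR_\nu/\MI_{\nu'}=\VR_{\bar\nu}$ and $\MI_\nu/\MI_{\nu'}=\MI_{\bar\nu}$. Put $\PI:=\MI_{\nu'}\cap R$. A direct check shows that $\nu'$ is centered at $(R_\PI,\PI R_\PI)$, that $\bar\nu$ is centered at $(R/\PI,\MI/\PI)$, that both lie in $\mathcal{M}$, and that $\nu'(f_1)\leq\cdots\leq\nu'(f_q)$; let $p$ be the largest index with $\nu'(f_p)=0$, so $f_1,\ldots,f_p\notin\PI$ while $f_{p+1},\ldots,f_q\in\PI$.

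The first step is to uniformize $\nu'$. The hypothesis, applied to the rank one valuation $\nu'$, to $(R_\PI,\PI R_\PI)$ and to $Z$, gives a regular local ring $S$ birationally dominating $R_\PI$, dominated by $\VR_{\nu'}$, with regular system of parameters $v=(v_1,\ldots,v_s)$, in which each $f_i$ is a unit of $S$ times a monomial in $v$ and $f_1\mid\cdots\mid f_q$; for $i\leq p$ we have $f_i\in S^\times$, and for $i>p$ we write $f_i=\epsilon_iv^{\alpha_i}$ with $\epsilon_i\in S^\times$ and $v^{\alpha_i}\neq 1$. Realize $S$ as a localization $(A_0)_{\mathfrak{q}_0}$ of a finitely generated birational extension $A_0\subseteq\VR_{\nu'}$ of $R_\PI$ chosen to contain the $v_j$, the $\epsilon_i^{\pm 1}$ and the quotients $f_{i+1}/f_i$; here $\mathfrak{q}_0=\MI_{\nu'}\cap A_0$, and $A_0/\mathfrak{q}_0\hookrightarrow k'$ has fraction field $S/\MI_S$. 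With $\mathfrak{m}_0:=\MI_\nu\cap A_0\supseteq\mathfrak{q}_0$, the ring $B:=(A_0)_{\mathfrak{m}_0}$ lies in $\mathcal{M}$ and dominates $(R,\MI)$, and $\bar B:=B/\mathfrak{q}_0B\in\mathcal{M}$ embeds into $k'$ with $\bar\nu$ centered at it. The second step is to uniformize $\bar\nu$: since it has rank $r-1$, the induction hypothesis — applied to $\bar\nu$, to $\bar B$ (or a convenient localization of a finitely generated birational extension of it), and to the images in that ring of $f_1,\ldots,f_p$, of $\epsilon_{p+1},\ldots,\epsilon_q$ and of enough of the quotients $f_{i+1}/f_i$ — produces a regular local ring $\bar T$ birationally dominating $\bar B$, dominated by $\VR_{\bar\nu}$, with regular system of parameters $\bar u=(\bar u_1,\ldots,\bar u_b)$, in which all these images are units times monomials in $\bar u$ and the images of the $f_i$ form a divisibility chain.

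The decisive step is to glue $S$ and $\bar T$ into one regular local ring $T$ uniformizing $\nu$ relative to $Z$. Lifting to $\VR_{\nu'}$ the data exhibiting $\bar T$ over $\bar B$ and the $\bar u_k$, adjoining these — together with the $v_j$ and suitably many quotients — to $B$, one forms a finitely generated birational extension $C\subseteq\VR_{\nu'}$ of $B$ and sets $T:=C_{\mathfrak{m}_C}$, $\mathfrak{m}_C:=\MI_\nu\cap C$; then $T\in\mathcal{M}$, $T$ birationally dominates $(R,\MI)$, $T_{\mathfrak{q}_T}\cong S$ and $T/\mathfrak{q}_T\cong\bar T$ with $\mathfrak{q}_T:=\MI_{\nu'}\cap T$, and since $\VR_\nu$ is the pullback of $\VR_{\bar\nu}$ along $\VR_{\nu'}\to k'$ one checks (splitting according to the sign of $\nu'$) that $T\subseteq\VR_\nu$, so $T$ is dominated by $\VR_\nu$. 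The hard part — the step I expect to cost the most work — is to choose $C$ so that $\mathfrak{q}_T$ is generated by the images of $v_1,\ldots,v_s$; since $T_{\mathfrak{q}_T}=S$ and $T/\mathfrak{q}_T=\bar T$ are regular, this forces $T$ to be regular with regular system of parameters $(v_1,\ldots,v_s,u_1,\ldots,u_b)$, after which $f_i=(\mathrm{unit})\,u^{\gamma_i}$ for $i\leq p$ (the $\mathfrak{q}_T$-correction being absorbed into the unit) and $f_i=(\mathrm{unit})\,u^{\gamma_i}v^{\alpha_i}$ for $i>p$ once $\epsilon_i$ has been monomialized on the residue side, while the divisibility chain $f_1\mid\cdots\mid f_q$ in $T$ comes from the two chains in $\bar T$ and in $S$ together with the presence of the $f_{i+1}/f_i$ in $T$. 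The genuinely delicate point is to reconcile, across the junction $f_p\mid f_{p+1}$, the factorization governed by $\bar\nu$ and the $u_k$ with the one governed by $\nu'$ and the $v_j$: the outputs of the two uniformizations need not fit together directly — for instance the residue of $\epsilon_{p+1}$ may have negative $\bar\nu$-value, so that $f_{p+1}$ is not yet a genuine monomial in $(v,u)$ — and one must adjoin further elements to $C$, possibly perform one more blow-up, and then exhibit the correct regular system of parameters. Carrying this out closes the induction and proves Theorem~\ref{Teo_3}; Theorems~\ref{Teo_1} and~\ref{Teo_2} follow from the same scheme with the divisibility requirement dropped.
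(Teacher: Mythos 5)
Your overall scheme (induction on rank, split $\nu$ into a rank one coarsening and a valuation induced on its residue field, uniformize each piece, then lift and glue) is the same as the paper's, but the two steps you yourself flag as ``the hard part'' and ``the genuinely delicate point'' are not loose ends to be tidied later: they are the actual content of the proof, and neither is supplied. First, the claim that $C$ can be chosen so that $\mathfrak{q}_T$ is generated by $v_1,\ldots,v_s$ is precisely what the paper establishes via the relations $a_k y_{r+k}+b_{1k}y_1+\cdots+b_{rk}y_r-h_k=0$ with $h_k\in\PI^2$, followed by a specific sequence of blowings up along ideals of the form $\left(a,y_1,\ldots,y_{r+s}\right)$ that successively absorb the superfluous generators $y_{r+1},\ldots,y_{r+s}$ of the center into $\left(y_1^{(n)},\ldots,y_r^{(n)}\right)$; without an argument of this kind the regularity of $T$ is simply unproved.

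Second --- and here your choice of decomposition actively works against you --- you make the \emph{coarsening} $\nu'$ the rank one piece and apply induction to $\bar\nu$ on the residue field, whereas the paper writes $\nu=\nu_1\circ\nu_2$ with $rk(\nu_2)=1$, putting the rank one piece on the residue field. The paper's reconciliation across the junction rests on an archimedean trick: blowing up along $\left(x^n,y_1,\ldots,y_r\right)$ replaces $\alpha_i$ by $\alpha_i x^{n|\gamma_i|}$, and because $\nu_2$ has rank one one can take $n$ so large that $\nu_2(\alpha_i+\PI)+n|\gamma_i|\nu_2(x+\PI)$ becomes increasing in $i$; only then does the embedded hypothesis for $\nu_2$ return exponents with $x^{\epsilon_1}\mid\cdots\mid x^{\epsilon_q}$ compatible with $y^{\gamma_1}\mid\cdots\mid y^{\gamma_q}$, which is what yields $f_1\mid\cdots\mid f_q$ in the final regular ring. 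In your setup the analogous reordering must be done on the $\bar\nu$-values of the residues of the $\epsilon_i$, but $\bar\nu$ has rank $r-1$ and is not archimedean, so no finite power of an auxiliary element need suffice; worse, as you observe, $\bar\epsilon_i=\overline{f_i/v^{\alpha_i}}$ may not lie in $\VR_{\bar\nu}$ at all, so the induction hypothesis for $\bar\nu$ cannot even be invoked on these elements directly. I would reverse the decomposition so that the rank one valuation lives on the residue field, and then supply the two missing arguments along the paper's lines.
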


\section{Preliminaries}

We will assume that the reader is familiar with basic facts about valuations and will use them without further reference. For general valuation theory we recommend \cite{FvkB}, \cite{Pre} and \cite{Zar_6} and for basic commutative algebra we suggest \cite{Ath}.

\subsection{Decomposition of a valuation}

Let $\nu:K^\times\lra \Gamma$ be a valuation of the field $K$. We denote by $\VR_\nu$ its valuation ring and by $\MI_\nu$ its maximal ideal. We define the value group $\nu K$ of $\nu$ as the subgroup of $\Gamma$ generated by $\{\nu a\mid a\in K^\times\}$, the residue field associated to $\nu$ as the field $K_\nu:=\VR_\nu/\MI_\nu$ and the place $P_\nu$ associated with $\nu$ as
\begin{displaymath}
\begin{array}{rccl}
P_\nu: & K &\lra    &K_\nu \cup\{\infty\}\\
          & a &\longmapsto &\left\{\begin{array}{ll} a+\MI_\nu &\textnormal{ if }a\in\VR_\nu \\ \infty &\textnormal{ otherwise} \end{array}
\right. .
\end{array}
\end{displaymath}
Also, given a place $P:K\lra k\cup\{\infty\}$ we can define the valuation
\begin{displaymath}
\begin{array}{rccl}
\nu_P: & K^\times &\lra    &K^\times/\VR_P^\times\\
          & a &\longmapsto &a\VR_P^\times
\end{array}
\end{displaymath}
associated to $P$ where $\VR_P:=P^{-1}(k)$ is the valuation ring of $P$.

Take $\Delta$ a convex subgroup of $\Gamma$ and let
\[
\pi_\Delta: \Gamma\lra \Gamma/\Delta
\]
be the canonical epimorphism of $\Gamma$ onto the quotient group $\Gamma/\Delta$. Then the function
$$
\nu_\Delta=\pi_\Delta\circ\nu
$$
is a valuation of $K$ whose valuation ring $\VR_{\nu_\Delta}$ contains $\VR_\nu$.
The valuation $\nu$ induces a valuation $\overline{\nu}_\Delta:K_{\nu_\Delta}^\times\lra \Delta$ by setting
\[
\overline{\nu}_\Delta(a+\MI_{\nu_\Delta})=\nu(a)
\]
for all $a\in \VR_\nu\setminus\MI_\nu$.

Consider the places $P_\Delta$ and $\overline{P}_\Delta$ associated to $\nu_\Delta$ and $\overline{\nu}_\Delta$, respectively, and let $P_\Delta\circ\overline{P}_\Delta$ be the composition of $P_\Delta$ and $\overline{P}_\Delta$ as functions, i.e.
\begin{displaymath}
\begin{array}{rccl}
P_\Delta\circ\overline{P}_\Delta: & K &\lra    &K_{\overline{\nu}} \cup\{\infty\}\\
          & a &\longmapsto &\left\{\begin{array}{ll} \left(aP_\Delta\right)\overline{P}_\Delta &\textnormal{ if }a\in\VR_{\nu_\Delta} \\ \infty &\textnormal{ otherwise} \end{array}
\right. .
\end{array}
\end{displaymath}

Then $P_\Delta\circ\overline{P}_\Delta$ is a place of $K$ and we can consider the valuation $\nu_\Delta\circ\overline{\nu}_\Delta$ in $K$ as the valuation associated to $P_\Delta\circ\overline{P}_\Delta$. It is easy to see that the valuation rings of $\nu$ and $\nu_\Delta\circ\overline{\nu}_\Delta$ are the same, and consequently these valuations are equivalent. Since we are considering classes of valuations we will say that $\nu=\nu_\Delta\circ\overline{\nu}_\Delta$ and we will say that this is \textbf{the decomposition of $\nu$ associated to $\Delta$}.

\begin{Def}
Let $(R,\MI)$ be a Noetherian local ring and $K=Quot(R)$. Given a valuation $\nu:K^\times\lra \Gamma$ we say that $\nu$ is \textbf{centered} at $\left(R,\MI\right)$ if $R\subseteq \VR_\nu$ and $\mathfrak{m}=R\cap\mathfrak{m}_\nu$. If
$$
R\subseteq\VR_\nu\subseteq K=Quot(R)
$$
is a ring (not necessarily local) we say that $\mathfrak{p}=R\cap\mathfrak{m}_\nu$ is \textbf{the center} of $\nu$ in $R$.
\end{Def}

Let $\nu$ be a valuation of $K=Quot(R)$ centered at the local ring $(R,\MI)$. If $\nu=\nu_1\circ\nu_2$ then $\nu_1$ has a center $\PI=\MI_{\nu_1}\cap R$ which is a subset of $\MI$. Define now the subgroup $\Delta\subseteq \nu K$ as the convex hull of
\[
\{\nu(x)\mid \nu_1(x)=0\}
\]
in $\nu K$. Then the valuations $\nu_1$ and $\nu_2$ are equivalent to $\nu_\Delta$ and $\overline{\nu}_\Delta$ respectively. In particular, for any element $x\in \MI\setminus\PI$ we have $\nu(y)>\nu(x)$ for all $y\in \PI$.

\begin{Def}
Let $R$ be a ring and let $\PI$ be a prime ideal of $R$. Then the field
\[
\kappa(\PI):=R_\PI/\PI R_\PI
\]
is called \textbf{the residue field} of $\PI$.
\end{Def}

\begin{Lema}\label{Le_2}
Let $\nu=\nu_1\circ\nu_2$ be a valuation of $K=Quot(R)$ centered at the local domain $(R,\MI)$. If $\PI\subseteq\MI$ is the center of $\nu_1$ in $R$ then $\kappa(\PI)$ embeds naturally in $K_{\nu_1}$ and the restriction of $\nu_2$ to $\kappa(\PI)$ is centered at $(R/\PI,\MI/\PI)$.
\end{Lema}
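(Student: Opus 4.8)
The plan is to build the embedding $\kappa(\PI)\hookrightarrow K_{\nu_1}$ by localizing the inclusion $R\subseteq\VR_{\nu_1}$ at $\PI$, and then to read off the centering statement for the restriction of $\nu_2$ directly from the centering statement for $\nu$, using that $\VR_\nu$ and $\MI_\nu$ are the preimages under the residue map of $\nu_1$ of $\VR_{\nu_2}$ and $\MI_{\nu_2}$ respectively.

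First I would check that $R_\PI\subseteq\VR_{\nu_1}$. Since $\nu$ is centered at $(R,\MI)$ we have $R\subseteq\VR_\nu\subseteq\VR_{\nu_1}$, and $\PI=\MI_{\nu_1}\cap R$; hence every $s\in R\setminus\PI$ lies in $\VR_{\nu_1}\setminus\MI_{\nu_1}$ and so is a unit of $\VR_{\nu_1}$, which gives $R_\PI\subseteq\VR_{\nu_1}$. Composing with the canonical surjection $P_{\nu_1}\colon\VR_{\nu_1}\to K_{\nu_1}$ we obtain a ring homomorphism $\varphi\colon R_\PI\to K_{\nu_1}$ whose kernel $\MI_{\nu_1}\cap R_\PI$ is a proper ideal of the local ring $R_\PI$ and contains $\PI R_\PI$; since $\PI R_\PI$ is the maximal ideal of $R_\PI$, the kernel equals $\PI R_\PI$. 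Thus $\varphi$ induces the natural embedding $\kappa(\PI)=R_\PI/\PI R_\PI\hookrightarrow K_{\nu_1}$. As $R$ is a domain, $\kappa(\PI)=Quot(R/\PI)$, so the restriction of $\nu_2$ (a valuation of $K_{\nu_1}$) along this embedding is a valuation of $Quot(R/\PI)$, which I denote by $\overline{\nu}_2$.

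It remains to check that $\overline{\nu}_2$ is centered at $(R/\PI,\MI/\PI)$. Recall from the decomposition $\nu=\nu_1\circ\nu_2$ recalled above that, inside $\VR_{\nu_1}$, one has $\VR_\nu=P_{\nu_1}^{-1}(\VR_{\nu_2})$ and $\MI_\nu=P_{\nu_1}^{-1}(\MI_{\nu_2})$. The image of $R/\PI$ under $\varphi$ is $P_{\nu_1}(R)$, and since $R\subseteq\VR_\nu$ this image is contained in $\VR_{\nu_2}$; hence $R/\PI\subseteq\VR_{\overline{\nu}_2}$. For the maximal ideal, take $r\in R$ with image $\overline{r}\in R/\PI$: then $\overline{r}\in\MI_{\overline{\nu}_2}$ iff $P_{\nu_1}(r)\in\MI_{\nu_2}$ iff $r\in\MI_\nu$, and because $\nu$ is centered at $(R,\MI)$ this is equivalent to $r\in\MI_\nu\cap R=\MI$, i.e. to $\overline{r}\in\MI/\PI$. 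Therefore $(R/\PI)\cap\MI_{\overline{\nu}_2}=\MI/\PI$, which is exactly the assertion.

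I expect the only genuinely delicate point to be the bookkeeping around the identification of $\nu_2$ with a valuation of $K_{\nu_1}$ and the resulting formulas $\VR_\nu=P_{\nu_1}^{-1}(\VR_{\nu_2})$, $\MI_\nu=P_{\nu_1}^{-1}(\MI_{\nu_2})$; these follow from the description of the composition of places in \S 2.1. One should also make sure that the embedding $\kappa(\PI)\hookrightarrow K_{\nu_1}$ used to restrict $\nu_2$ is compatible with the copy of $R/\PI$ sitting inside $K_{\nu_1}$ via $P_{\nu_1}$, but this is immediate from the construction of $\varphi$ as a localization of $R\hookrightarrow\VR_{\nu_1}$ followed by $P_{\nu_1}$. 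Everything else is a routine verification.
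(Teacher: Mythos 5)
Your proposal is correct and follows essentially the same route as the paper: localize $R\subseteq\VR_{\nu_1}$ at $\PI$, identify the kernel of $R_\PI\to K_{\nu_1}$ with $\PI R_\PI$ to get the embedding $\kappa(\PI)\hookrightarrow K_{\nu_1}$, and then verify $(R/\PI)\cap\MI_{\nu_2}=\MI/\PI$ from the centering of $\nu$. The only cosmetic difference is that the paper checks the last equality elementwise via $\nu_2(a+\PI)=\nu(a)$ for $a\notin\PI$, whereas you phrase the same fact through the preimage formulas $\VR_\nu=P_{\nu_1}^{-1}(\VR_{\nu_2})$ and $\MI_\nu=P_{\nu_1}^{-1}(\MI_{\nu_2})$.
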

\begin{proof}
It is easy to see that $R_\PI\subseteq \VR_{\nu_1}$ and that $\PI R_\PI=R_\PI\cap\MI_{\nu_1}$. Therefore,
\[
\kappa(\PI)=R_\PI/\PI R_\PI\hookrightarrow \VR_{\nu_1}/\MI_{\nu_1}=K_{\nu_1}.
\]

On the other hand, $\kappa(\PI)=Quot(R/\PI)$ so it remains to show that $\MI_{\nu_2}\cap R/\PI=\MI/\PI$. We will prove this equality by proving that an element $a\in R$ belongs to its left hand side if and only if it belongs to the right hand side. Take an element $a\in R$. If $a\in \PI$ then $a+\PI=0+\PI\in\MI/\PI\cap \left(\MI_{\nu_2}\cap R/\PI\right)$, so we assume that $a\notin \PI$. In this case, $\nu_2(a+\PI)=\nu(a)$ and we have
\[
a+\PI\in \MI/\PI\Llr a\in\MI\Llr\nu(a)>0\Llr \nu_2(a+\PI)>0\Llr a+\PI\in\MI_{\nu_2}.
\]
\end{proof}

\begin{Obs}\label{RnusurjectstoRnu2} Let $\PI_\Delta=\MI_{\nu_1}\cap\VR_\nu$. Applying the above Lemma to the local ring $R=\VR_\nu$, we see that there is a natural surjective homomorphism $\Phi_\Delta:\VR_\nu\rightarrow\VR_{\nu_2}$, whose kernel is $\PI_\Delta$.
\end{Obs}

\begin{Lema}\label{localization}
\begin{description}
\item[(1)] Let
\begin{equation}
R\hookrightarrow R'\label{eq:RinR'}
\end{equation}
be an injective homomorphism of domains, having a common field of fractions $K$. Let $\nu$ be a valuation of $K$ such that $R'\subset\VR_\nu$. Let $\MI$ denote the center of $\nu$ in $R$, and $\MI'$ the center of $\nu$ in $R'$. Then
\begin{equation}
R_{\MI}\subset R'_{\MI'}\label{eq:RinR'localized}
\end{equation}
(viewed as subrings of $K$).
\medskip

\item[(2)] Let $S\subset R$ be a multiplicative subset, such that
\begin{equation}
S\cap\MI=\emptyset.\label{eq:Snotincenter}
\end{equation}
Assume that $R'=R_S$. Then the inclusion (\ref{eq:RinR'localized}) is, in fact, an equality.
\end{description}
\end{Lema}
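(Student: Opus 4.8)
The plan is to base everything on a single observation: since $R\subseteq\VR_\nu$, a nonzero $s\in R$ lies outside the center $\MI=R\cap\MI_\nu$ of $\nu$ in $R$ if and only if $\nu(s)=0$, i.e. $s$ is a unit of $\VR_\nu$; and likewise with $(R',\MI')$ in place of $(R,\MI)$. Thus the complement of the center of $\nu$ in any subring of $\VR_\nu$ is exactly the set of its nonzero elements that are units of $\VR_\nu$, and this set is unaffected by localization. Granting this, both parts are formal computations with fractions inside $K$.

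For (1), I would first check that $R\setminus\MI\subseteq R'\setminus\MI'$: if $s\in R\setminus\MI$ then $s\neq 0$ and $\nu(s)=0$, and since $s\in R\subseteq R'$ with $s\notin\MI_\nu$ we get $s\in R'\setminus(R'\cap\MI_\nu)=R'\setminus\MI'$. Hence an arbitrary element $a/s$ of $R_\MI$ (with $a\in R$, $s\in R\setminus\MI$) has numerator in $R'$ and denominator in $R'\setminus\MI'$, so it lies in $R'_{\MI'}$. This establishes (\ref{eq:RinR'localized}).

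For (2), note first that (\ref{eq:Snotincenter}) together with $S\subseteq R\subseteq\VR_\nu$ forces $\nu(s)=0$ for every $s\in S$, so $R_S\subseteq\VR_\nu$ and the center $\MI'$ of $\nu$ in $R'=R_S$ is defined; the inclusion ``$\subseteq$'' in (\ref{eq:RinR'localized}) is already given by part (1). For the reverse inclusion, take $x\in R'_{\MI'}$ and write it as a quotient of two elements of $R_S$, say $x=(a/s_1)/(c/s_2)$ with $a,c\in R$, $s_1,s_2\in S$, and with $c/s_2\notin\MI'$. Then $x=as_2/(s_1c)$ with $as_2\in R$, and it remains to check that $s_1c\in R\setminus\MI$. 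From $c/s_2\notin\MI'$ we get $\nu(c/s_2)=0$, while $\nu(s_1)=\nu(s_2)=0$ because $s_1,s_2\in S$; hence $\nu(c)=0$, so $\nu(s_1c)=0$, in particular $s_1c\neq 0$, and the basic observation gives $s_1c\notin\MI$. Therefore $x\in R_\MI$, and the two rings coincide.

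I do not expect a genuine obstacle here: the content is just the translation between ``outside the center'' and ``value zero'', together with routine bookkeeping with fractions. The only points deserving a word of care are that every denominator used is nonzero (automatic, since a center is a proper ideal containing $0$) and, for part (2), the verification that $R_S\subseteq\VR_\nu$ so that $\MI'$ is meaningful in the first place.
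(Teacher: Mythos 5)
Your proof is correct. Part (1) is exactly the paper's argument: the inclusion $R\setminus\MI\subset R'\setminus\MI'$, followed by the description of the localizations as sets of fractions inside $K$. For part (2) you take a different route to the reverse inclusion: the paper observes that $S\cap\MI=\emptyset$ forces $R_S\subset R_\MI$ (each $s\in S$ is already invertible in $R_\MI$) and then simply reapplies part (1) to the inclusion $R'=R_S\subset R_\MI$, using that localizing $R_\MI$ at its own center returns $R_\MI$; you instead verify the inclusion by hand, writing an element of $R'_{\MI'}$ as a double fraction $\left(a/s_1\right)/\left(c/s_2\right)$ and clearing the denominators from $S$ via the observation that every element outside a center has value zero. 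Both arguments are sound; the paper's is shorter because it recycles part (1), while yours is self-contained and makes explicit the valuation-theoretic mechanism ($\nu(s)=0$ for nonzero $s\notin\MI$) that the paper leaves implicit. Your preliminary check that $R_S\subset\VR_\nu$, so that the center $\MI'$ is even defined, is a point the paper passes over silently and is worth retaining.
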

\begin{proof}
\begin{description}
\item[(1)] The inclusion (\ref{eq:RinR'}) induces the inclusion
\begin{equation}
R\setminus\MI\subset R'\setminus\MI'.\label{eq:RinR'complement}
\end{equation}
The desired inclusion (\ref{eq:RinR'localized}) follows immediately from (\ref{eq:RinR'}) and (\ref{eq:RinR'complement}).
\medskip

\item[(2)] The assumption (\ref{eq:Snotincenter}) implies that $R'=R_S\subset R_{\MI}$. Then
\begin{equation}
R'_{\MI'}\subset R_{\MI}\label{eq:R'inRlocalized}
\end{equation}
by the first part of the Lemma, and the result follows.
\end{description}
\end{proof}

\subsection{Regularity of a ring}

Given a local Noetherian ring $\left(R,\mathfrak{m},k\right)$ (not necessarily equi-characteristic) we define the dimension of $R$, denoted by $\dim R$, as the Krull dimension of $R$, i.e., the maximum length of chains of prime ideals in $R$. For convenience, we will sometimes write only $R$ or $(R,\MI)$ for $\left(R,\mathfrak{m},k\right)$.

\begin{Obs}
[Theorem 11.14 of \cite{Ath}] The dimension of $\left(R,\mathfrak{m}\right)$ is equal to the minimal number of generators of an $\mathfrak{m}$-primary ideal.
\end{Obs}

Let $(R,\MI)$ be a Noetherian local ring and let $u_1,\ldots,u_d\in\MI$. We say that
$$
u=(u_1,\ldots,u_d)
$$
is \textbf{a system of parameters} of $R$ (or of $\MI$) if $u_1,\ldots,u_d$ generate an $\MI$-primary ideal. An element $f\in R$ is said to be a monomial in $u$ if there exists
$$
\gamma:=\left(\gamma^{(1)},\ldots,\gamma^{(d)}\right)\in \left(\N\cup\{0\}\right)^d
$$
and $c\in R^\times$ such that
\[
f=cu^\gamma:=c\prod_{i=1}^du_i^{\gamma^{(i)}}.
\]
In this case we define $|\gamma|:=\displaystyle\sum_{i=1}^d \gamma^{(i)}$.

\begin{Obs}
We will say sometimes ``the local ring $(R,u)$" meaning that $R$ is a local ring with maximal ideal $\MI$ such that $u=(u_1,\ldots,u_d)$ is a fixed set of generators of $\MI$.
\end{Obs}

The local ring $R$ is said to be regular if $\mathfrak{m}$ can be generated by $\dim R$ many elements. In this case, a set of generators $\left(u_1,\ldots,u_d\right)$ of $\MI$ such that $d=\dim R$ is called a regular system of parameters of $R$.

\begin{Def}
Let $\left(R,\MI\right)$ be a Noetherian local domain with quotient field $K$ and $\nu$ be a valuation of $K$ centered at $R$. A \textbf{local blowing up} of $\left(R,\MI\right)$ with respect to $\nu$ a local ring homomorphism
\[
\pi:\left(R ,\MI\right)\lra \left(R^{(1)},\MI^{(1)}\right)
\]
of the following form: take elements $a_i,b_i\in R$, $i=1,\ldots,r$ such that $\nu\left(b_i\right)\leq \nu\left(a_i\right)$ for all $i=1,\ldots, r$ and let
\[
R'=R\left[\frac{a_1}{b_1},\ldots,\frac{a_r}{b_r}\right]\textit{ and } \MI'=\MI_\nu\cap R'.
\]
The local ring $\left(R^{(1)},\MI^{(1)}\right)$ is the localization of $R'$ with respect to the prime ideal $\MI'$, that is,
\[
R^{(1)}=R'_{\MI'}=\left\{\left.\frac{x}{y}\in K\ \right|\ y\notin \MI'\right\}\textit{ and }\MI^{(1)}=\MI'R'_{\MI'}.
\]
The local blowing up now is the natural inclusion $\pi:R\lra R^{(1)}$. We will say that the local blowing up $\pi$ is \textbf{simple} if $r=1$, i.e., $R^{(1)}=R\left[\displaystyle\frac{a}{b}\right]_{\MI'}$.
\end{Def}

\begin{Lema}\label{Le_3}
Every local blowing up can be decomposed as a finite sequence of simple local blowing ups, i.e., given a local blowing up
\[
\pi:R\lra R\left[\frac{a_1}{b_1},\ldots,\frac{a_r}{b_r}\right]_{\MI'}
\]
we can find a finite sequence of simple local blowing ups
\[
(R,\MI)\lra \left(R^{(1)},\MI^{(1)}\right)\lra \cdots \lra \left(R^{(r)},\MI^{(r)}\right)
\]
such that $R^{(r)}=R\left[\displaystyle\frac{a_1}{b_1},\ldots,\frac{a_r}{b_r}\right]_{\MI'}$ and $\pi$ is the composition of the simple local blowing ups $\pi_i:R^{(i-1)}\lra R^{(i)}$ (where we set $R^{(0)}:=R$).
\end{Lema}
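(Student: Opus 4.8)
The plan is to induct on $r$, the number of ratios adjoined, with the inductive hypothesis being that any local blowing up obtained by adjoining at most $r-1$ ratios decomposes as a sequence of simple local blowing ups. The key observation is that adjoining the ratios one at a time, followed by localizing at the appropriate center of $\nu$, produces an increasing chain of local rings that ultimately reaches $R\left[\frac{a_1}{b_1},\ldots,\frac{a_r}{b_r}\right]_{\MI'}$; the only subtlety is checking that localizing after the first step does not lose information needed to adjoin the remaining ratios, and that the final localization matches $\MI'$.

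First I would set $R^{(1)}:=R\left[\frac{a_1}{b_1}\right]_{\MI_\nu\cap R\left[a_1/b_1\right]}$, which is a simple local blowing up by definition since $\nu(b_1)\leq\nu(a_1)$. Then I would apply the inductive hypothesis to the ring $R^{(1)}$ and the ratios $\frac{a_2}{b_2},\ldots,\frac{a_r}{b_r}$, which lie in $\VR_\nu$ with $\nu(b_i)\leq\nu(a_i)$, to obtain a chain of simple local blowing ups
\[
\left(R^{(1)},\MI^{(1)}\right)\lra\left(R^{(2)},\MI^{(2)}\right)\lra\cdots\lra\left(R^{(r)},\MI^{(r)}\right)
\]
with $R^{(r)}=R^{(1)}\left[\frac{a_2}{b_2},\ldots,\frac{a_r}{b_r}\right]_{\MI''}$ where $\MI''=\MI_\nu\cap R^{(1)}\left[\frac{a_2}{b_2},\ldots,\frac{a_r}{b_r}\right]$. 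The composition of all these maps with $\pi_1:R\lra R^{(1)}$ is then a sequence of simple local blowing ups, and it remains to identify $R^{(r)}$ with $R\left[\frac{a_1}{b_1},\ldots,\frac{a_r}{b_r}\right]_{\MI'}$.

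The main technical point, and the step I expect to be the real obstacle, is showing that
\[
R^{(1)}\left[\frac{a_2}{b_2},\ldots,\frac{a_r}{b_r}\right]_{\MI''}=R\left[\frac{a_1}{b_1},\ldots,\frac{a_r}{b_r}\right]_{\MI'}.
\]
Write $A=R\left[\frac{a_1}{b_1},\ldots,\frac{a_r}{b_r}\right]$ and $B=R^{(1)}\left[\frac{a_2}{b_2},\ldots,\frac{a_r}{b_r}\right]$; clearly $A\subseteq B$ since $R^{(1)}$ is a localization of $R\left[\frac{a_1}{b_1}\right]\subseteq A$, so $B=A_S$ where $S$ is the multiplicative set $R\left[a_1/b_1\right]\setminus\left(\MI_\nu\cap R\left[a_1/b_1\right]\right)$. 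Because every element of $S$ lies outside $\MI_\nu$, it lies outside $\MI_\nu\cap A$, so $S$ meets neither the center of $\nu$ in $A$ nor the center in $B$; then part (2) of Lemma \ref{localization}, applied with the roles of $R,R'$ played by $A, B$, gives that the localizations of $A$ and $B$ at the respective centers of $\nu$ coincide. Since $\MI'$ is the center of $\nu$ in $A$ and $\MI''$ is the center of $\nu$ in $B$, this yields $A_{\MI'}=B_{\MI''}=R^{(r)}$, completing the argument. The one bookkeeping item to be careful about is that all the intermediate rings sit inside $\VR_\nu$ (so that "center of $\nu$" is always defined and Lemma \ref{localization} applies), which follows inductively from $\nu(b_i)\leq\nu(a_i)$.
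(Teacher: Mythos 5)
Your proposal is correct and follows essentially the same route as the paper: both decompose $\pi$ by adjoining the ratios $\frac{a_k}{b_k}$ one at a time and localizing at the center of $\nu$, and both reduce the identification of the final ring to Lemma \ref{localization}. The only (cosmetic) difference is that you peel off the first ratio and invoke part (2) of that lemma via the observation $B=A_S$, whereas the paper runs a forward induction comparing $R^{(k)}$ with $R'^{(k)}$ and uses part (1) for the nontrivial inclusion.
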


\begin{proof}
Define the rings
\[
R'^{(k)}=R\left[\frac{a_1}{b_1},\ldots,\frac{a_k}{b_k}\right]_{R\left[\frac{a_1}{b_1},\ldots,\frac{a_k}{b_k}\right]\cap \MI_\nu},\qquad 1\leq k\leq r,
\]
and let us define $R^{(k)}$ inductively by setting $R^{(0)}=R$ and
\[
R^{(k)}=R^{(k-1)}\left[\frac{a_k}{b_k}\right]_{R^{(k-1)}\left[\frac{a_k}{b_k}\right]\cap\MI_\nu},\qquad 1\leq k\leq r.
\]

The inclusions $\pi_k:R^{(k-1)}\lra R^{(k)}$ are all simple local blowing ups, so we just have to prove that $R^{(r)}=R'^{(r)}$ and we will have $\pi=\pi_r\circ\ldots\circ\pi_1$ because all the $\pi_k$ are inclusions. We will prove by induction that $R^{(k)}=R'^{(k)}$ for all $k=1,\ldots,r$ and we will be done.

By definition, $R^{(1)}=R'^{(1)}$ so assume that $k>1$ and that
\begin{equation}
R^{(k-1)}=R'^{(k-1)}.\label{eq:Rk-1inR'k-1}
\end{equation}
 Let us prove that $R^{(k)}=R'^{(k)}$. The inclusion $R'^{(k)}\subseteq R^{(k)}$ is trivial so it remains to prove that
\begin{equation}
R^{(k)}\subseteq R'^{(k)}.\label{eq:RkinR'k}
\end{equation}
To prove (\ref{eq:RkinR'k}), first note that $R'^{(k-1)}\subset R'^{(k)}$, hence $R^{(k-1)}\subset R'^{(k)}$ by (\ref{eq:Rk-1inR'k-1}). We have $\displaystyle\frac{a_k}{b_k}\in R'^{(k)}$ by definition, so
\begin{equation}
R^{(k-1)}\left[\frac{a_k}{b_k}\right]\subset R'^{(k)}.\label{eq:unlocalized}
\end{equation}
Now (\ref{eq:RkinR'k}) is given by Lemma \ref{localization} (1). This completes the proof of the Lemma.
\end{proof}

\begin{Lema}\label{Le_1}
Let $(R,\MI)$ be a Noetherian local domain and $\nu$ a valuation on $K=Quot(R)$ which is centered at $(R,\MI)$. Take an ideal $\mathcal{I}$ of $R$ and elements $u_1,\ldots,u_d \in \mathcal{I}$ which generate $\mathcal{I}$. Assume that $\nu(u_{i_1})=\nu(u_{i_2})\leq \nu(u_i)$ for $1\leq i\leq d$ and let
\[
R'=R\left[\frac{u_1}{u_{i_1}},\ldots,\frac{u_d}{u_{i_1}}\right]\textit{ and }R''=R\left[\frac{u_1}{u_{i_2}},\ldots,\frac{u_d}{u_{i_2}}\right].
\]
Also, consider the prime ideals $\MI'=R'\cap\MI_\nu\subseteq R'$ and $\MI''=R''\cap\MI_\nu\subseteq R''$. Then
\[
R'_{\MI'}=R''_{\MI''}.
\]
\end{Lema}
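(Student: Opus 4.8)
The plan is to show each of the two local rings equals a common third ring, namely $R^{(1)} := R\left[\frac{u_1}{u_{i_1}},\ldots,\frac{u_d}{u_{i_1}},\frac{u_1}{u_{i_2}},\ldots,\frac{u_d}{u_{i_2}}\right]_{\MI_\nu\cap(\cdots)}$, localized at its center. It suffices, by symmetry in $i_1$ and $i_2$, to prove $R'_{\MI'}=R^{(1)}_{\MI^{(1)}}$. So the heart of the matter is adjoining the extra fractions $\frac{u_j}{u_{i_2}}$ to $R'$ and showing this does not change the ring after localizing at the center.

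First I would observe that $\frac{u_{i_1}}{u_{i_2}}\in R'_{\MI'}$. Indeed, since $\nu(u_{i_1})=\nu(u_{i_2})$, both $\frac{u_{i_1}}{u_{i_2}}$ and $\frac{u_{i_2}}{u_{i_1}}$ lie in $\VR_\nu$; moreover $\frac{u_{i_2}}{u_{i_1}}\in R'$ by construction, and it is a unit in $R'_{\MI'}$ because $\nu\!\left(\frac{u_{i_2}}{u_{i_1}}\right)=0$ means $\frac{u_{i_2}}{u_{i_1}}\notin\MI_\nu$, hence $\frac{u_{i_2}}{u_{i_1}}\notin\MI'$. Therefore its inverse $\frac{u_{i_1}}{u_{i_2}}$ belongs to $R'_{\MI'}$. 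Consequently, for every $j$,
\[
\frac{u_j}{u_{i_2}}=\frac{u_j}{u_{i_1}}\cdot\frac{u_{i_1}}{u_{i_2}}\in R'_{\MI'},
\]
since $\frac{u_j}{u_{i_1}}\in R'\subseteq R'_{\MI'}$. Thus $R''\subseteq R'_{\MI'}$, and in fact $R^{(1)}\subseteq R'_{\MI'}$.

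Next I would promote this to the desired equality of localizations. We have the chain of inclusions $R'\subseteq R^{(1)}\subseteq R'_{\MI'}\subseteq K$, all with common fraction field $K$, and all contained in $\VR_\nu$. Applying Lemma \ref{localization}(1) to the inclusion $R'\hookrightarrow R^{(1)}$, whose centers of $\nu$ are $\MI'$ and $\MI^{(1)}$ respectively, gives $R'_{\MI'}\subseteq R^{(1)}_{\MI^{(1)}}$. For the reverse inclusion, note that $R^{(1)}$ is obtained from $R'$ by inverting the element $s:=\frac{u_{i_2}}{u_{i_1}}$ (together with the already-present generators), and $s\notin\MI'$; so $R^{(1)}$ is a localization of $R'$ at a multiplicative set disjoint from $\MI'$, whence Lemma \ref{localization}(2) yields $R^{(1)}_{\MI^{(1)}}=R'_{\MI'}$. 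More carefully: the generators $\frac{u_j}{u_{i_1}}$ are already in $R'$, so $R^{(1)}=R'\!\left[\frac{u_{i_1}}{u_{i_2}}\right]=R'_S$ where $S=\{1,s,s^2,\ldots\}$ and $S\cap\MI'=\emptyset$ because $\nu(s)=0$. Then Lemma \ref{localization}(2) applies directly and gives $R'_{\MI'}=R^{(1)}_{\MI^{(1)}}$. By the symmetric argument $R''_{\MI''}=R^{(1)}_{\MI^{(1)}}$, and the lemma follows.

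The only delicate point is the bookkeeping that $R^{(1)}$ really is a simple localization $R'_S$ of $R'$ rather than a genuinely larger ring-extension — this is where one uses that the $\frac{u_j}{u_{i_1}}$ are already present in $R'$ and that $\frac{u_j}{u_{i_2}}=\frac{u_j}{u_{i_1}}s^{-1}$, so no new elements beyond $s^{-1}$ need to be adjoined. Once that is clear, everything reduces to the two parts of Lemma \ref{localization}, and there is no real obstacle.
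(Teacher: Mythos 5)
Your proof is correct and rests on the same key observation as the paper's: since $\nu\left(u_{i_2}/u_{i_1}\right)=0$, this element is a unit after localizing at the center of $\nu$, and Lemma \ref{localization} finishes the job. The paper argues marginally more directly---it shows $R'\subset R''_{\MI''}$ and applies part (1) of Lemma \ref{localization} in each direction, skipping the intermediate compositum $R^{(1)}$ and part (2)---but this is only a cosmetic difference.
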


\begin{proof}
We may assume, without loss of generality, that $i_1=1$ and $i_2=2$. Since
\[
\nu\left(\displaystyle\frac{u_2}{u_1}\right)=\nu(u_1)-\nu(u_2)=0
\]
we have $\displaystyle\frac{u_2}{u_1}\in R''_{\MI''}$. Hence for every $i\in\{1,\dots,d\}$ we have $\displaystyle\frac{u_i}{u_1}=\frac{u_i}{u_2}\frac{u_2}{u_1}\in R''_{\MI''}$. Then
\begin{equation}
R'\subset R''_{\MI''}.\label{eq:R'inR''}
\end{equation}
 From (\ref{eq:R'inR''}) and Lemma \ref{localization} (1) we obtain
\[
R'_{\MI'}\subset R''_{\MI''}.
\]
The opposite inclusion is analogous.
\end{proof}

Take now an ideal $\mathcal{I}$ of $R$ and let $u_0\in \mathcal{I}$ be an element such that $\nu(u_0)\leq \nu(\alpha)$ for all $\alpha\in \mathcal{I}$. Complete $u_0$ to sets $\{u_0,u_1,\ldots, u_q\}$ and $\{u_0,u'_1,\ldots,u'_{q'}\}$ of generators of $\mathcal{I}$. It is easy to see that
\[
R\left[\frac{u_1}{u_0},\ldots,\frac{u_q}{u_0}\right]=R\left[\frac{u'_1}{u_0},\ldots,\frac{u'_{q'}}{u_0}\right]=:R'.
\]
This, together with Lemma \ref{Le_1} above, guarantees that given an ideal $\mathcal{I}$ of $R$ the local blowing up
\[
R\lra R'_{R'\cap\MI_\nu}
\]
is uniquely determined by $\mathcal{I}$ and is independent of the particular set of generators $\{u_0,u_1,\ldots, u_q\}$.

\begin{Def}
The local blowing up described above is said to be the local blowing up of $(R,\MI)$ \textbf{with respect to} $\nu$ \textbf{along} $\mathcal{I}$.
\end{Def}

We will now prove a few Lemmas which will be essential in the proofs of our main results. From here until the end of this section we will assume that $(R,\MI)$ is a Noetherian local ring and $\nu$ a valuation of $K=Quot(R)$ centered at $(R,\MI)$. Also, assume that $\nu$ can be decomposed as $\nu=\nu_1\circ\nu_2$ and write $\PI=R\cap\MI_{\nu_1}$ for the center of $\nu_1$ in $R$.

\begin{Lema}\label{Le_4}
Let
\[
\widetilde{\pi}:R_\PI\lra \widetilde{R}=R_\PI\left[\frac{a}{b}\right]_{\widetilde{\PI}}
\]
be a simple local blowing up with respect to $\nu_1$, where $\widetilde{\PI}=\MI_{\nu_1}\cap R_\PI\left[\displaystyle\frac{a}{b}\right]$, and assume that $\nu(a)\geq\nu(b)$. Consider the sequence of local blowing ups
\[
R\lra R^{(1)}=R\left[a,b\right]_{\MI'}\lra R^{(2)}=R^{(1)}\left[\frac{a}{b}\right]_{\MI''}
\]
with respect to $\nu$, where $\MI'=\MI_\nu\cap R\left[a,b\right]$ and $\MI''=\MI_\nu\cap R^{(1)}\left[\displaystyle\frac{a}{b}\right]$. If $\PI^{(2)}$ is the center of $\nu_1$ in $R^{(2)}$ then $R^{(2)}_{\PI^{(2)}}=\widetilde{R}$.
\end{Lema}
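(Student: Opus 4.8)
The plan is to compute $R^{(2)}_{\PI^{(2)}}$ by moving localizations around with Lemma~\ref{localization}, exploiting that all the rings occurring in the construction are $R$-subalgebras of $R_\PI\left[\frac ab\right]$ localized at centers of $\nu$, and that all of them lie in $\VR_{\nu_1}$ (this uses $a,b\in R_\PI\subseteq\VR_{\nu_1}$, the fact that $\nu(a)\ge\nu(b)$ forces $\nu_1(a)\ge\nu_1(b)$ and hence $a/b\in\VR_{\nu_1}$ --- since $\nu_1=\nu_\Delta=\pi_\Delta\circ\nu$ with $\pi_\Delta$ order preserving --- and $\VR_\nu\subseteq\VR_{\nu_1}$, together with the observation below). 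The observation that makes the argument uniform is: if $S$ is a domain with $R\subseteq S\subseteq\VR_{\nu_1}$ and fraction field $K$, then every $s\in S$ lying outside the center $\MI_\nu\cap S$ of $\nu$ in $S$ satisfies $\nu_1(s)=0$, since $\nu(s)\le 0$ gives $\nu_1(s)=\pi_\Delta(\nu(s))\le 0$ while $s\in\VR_{\nu_1}$ gives $\nu_1(s)\ge 0$ (and the same holds, trivially, for $s$ outside $\MI_{\nu_1}\cap S$). In particular the multiplicative set $S\setminus\left(\MI_\nu\cap S\right)$ is disjoint from $\MI_{\nu_1}$, which is exactly the hypothesis needed to apply Lemma~\ref{localization}(2) with the valuation $\nu_1$.

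First I would show $R^{(1)}_{\PI^{(1)}}=R_\PI$, where $\PI^{(1)}:=\MI_{\nu_1}\cap R^{(1)}$. Writing $R^{(1)}=R[a,b]_{\MI'}=R[a,b]_{R[a,b]\setminus\MI'}$ and using that $R[a,b]\setminus\MI'$ is disjoint from $\MI_{\nu_1}$ (by the observation), Lemma~\ref{localization}(1) gives the inclusion $R[a,b]_{\MI_{\nu_1}\cap R[a,b]}\subseteq R^{(1)}_{\PI^{(1)}}$ and part~(2) upgrades it to an equality. Now $a,b\in R_\PI$, so $R[a,b]\subseteq R_\PI$ and in fact $R_\PI=R[a,b]_{R\setminus\PI}$, and $R\setminus\PI$ is disjoint from $\MI_{\nu_1}$ because $\PI=R\cap\MI_{\nu_1}$; hence Lemma~\ref{localization} applied to $R[a,b]\hookrightarrow R_\PI$ identifies $R[a,b]_{\MI_{\nu_1}\cap R[a,b]}$ with $R_\PI$ localized at its center of $\nu_1$, i.e.\ at its maximal ideal $\PI R_\PI$, which is just $R_\PI$. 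Thus $R^{(1)}_{\PI^{(1)}}=R_\PI$; in particular $R^{(1)}\subseteq R_\PI$.

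Then I would run the same two steps on the second blowing up. Exactly as in the first step above, Lemma~\ref{localization} applied to $R^{(1)}[a/b]\hookrightarrow R^{(2)}=R^{(1)}[a/b]_{\MI''}$ gives $R^{(2)}_{\PI^{(2)}}=R^{(1)}[a/b]_{\MI_{\nu_1}\cap R^{(1)}[a/b]}$. From $R^{(1)}\subseteq R_\PI$ we get $R^{(1)}[a/b]\subseteq R_\PI[a/b]$, and from $R^{(1)}_{\PI^{(1)}}=R_\PI$ we get $R_\PI[a/b]=R^{(1)}[a/b]_{R^{(1)}\setminus\PI^{(1)}}$, with $R^{(1)}\setminus\PI^{(1)}$ disjoint from $\MI_{\nu_1}$ because $\PI^{(1)}=R^{(1)}\cap\MI_{\nu_1}$; recall also $R_\PI[a/b]\subseteq\VR_{\nu_1}$. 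So Lemma~\ref{localization} applied to $R^{(1)}[a/b]\hookrightarrow R_\PI[a/b]$ yields
\[
R^{(2)}_{\PI^{(2)}}=R^{(1)}[a/b]_{\MI_{\nu_1}\cap R^{(1)}[a/b]}=R_\PI\left[\frac ab\right]_{\MI_{\nu_1}\cap R_\PI[a/b]}=R_\PI\left[\frac ab\right]_{\widetilde\PI}=\widetilde R .
\]

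I do not expect a genuine obstacle here: once Lemma~\ref{localization} is in hand the argument is essentially formal, and the single observation in the first paragraph disposes of all the disjointness checks at once. The one point demanding care is precisely that bookkeeping --- keeping straight which multiplicative set avoids which center of $\nu_1$, and in which ring --- together with verifying at the outset that every ring in play sits inside $\VR_{\nu_1}$; the hypothesis $\nu(a)\ge\nu(b)$ is used only to guarantee $a/b\in\VR_{\nu_1}$ (and, so that the stated sequence really is a sequence of local blowing ups with respect to $\nu$, that $\nu(a/b)\ge 0$).
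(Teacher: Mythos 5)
Your proof is correct and takes essentially the same route as the paper: both arguments rest entirely on Lemma \ref{localization}, using part (2) to move the localization at $\MI'$ (resp.\ $\MI''$) down to the center of $\nu_1$ (the disjointness you verify via your ``observation'' is exactly the paper's remark that the center of $\nu_1$ is contained in the center of $\nu$) and part (1) to compare with $R_\PI$ and $\widetilde{R}$. The only difference is organizational: you run everything as a chain of equalities, obtaining in passing the full equality $R^{(1)}_{\PI^{(1)}}=R_\PI$, whereas the paper proves only the inclusion $R^{(1)}_{\PI^{(1)}}\subset R_\PI$ and then establishes the two inclusions between $R^{(2)}_{\PI^{(2)}}$ and $\widetilde{R}$ separately.
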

\begin{proof} Let $\PI^{(1)}$, $\PI_0^{(1)}$ denote the centers of $\nu_1$ in $R^{(1)}$ and in $R[a,b]$, respectively. We have $a,b\in R_{\PI}$ by definition, so $R\left[a,b\right]\subset R_{\PI}$ and
\begin{equation}
R[a,b]_{\PI_0^{(1)}}\subset R_{\PI}\label{eq:RabinRlocalized}
\end{equation}
by Lemma \ref{localization} (1). Since $\PI_0^{(1)}\subset\MI'$, we have
\begin{equation}
R^{(1)}_{\PI^{(1)}}=R[a,b]_{\PI_0^{(1)}}\label{eq:R1inRab}
\end{equation}
by Lemma \ref{localization} (2). Combining (\ref{eq:RabinRlocalized}) and (\ref{eq:R1inRab}), we obtain
\begin{equation}
R^{(1)}_{\PI^{(1)}}\subset R_{\PI}.\label{eq:R1inRlocalized}
\end{equation}
Now, from the natural inclusion $R\subset R^{(2)}$ we have $R_{\PI}\subset R^{(2)}_{\PI^{(2)}}$ by Lemma \ref{localization} (1). Since $\displaystyle\frac ab\in R^{(2)}_{\PI^{(2)}}$ by definition, we obtain $R_{\PI}\left[\displaystyle\frac ab\right]\subset R^{(2)}_{\PI^{(2)}}$, so
\begin{equation}
\widetilde{R}\subset R^{(2)}_{\PI^{(2)}}\label{eq:RtildeinR2}
\end{equation}
by Lemma \ref{localization} (1). For the opposite inclusion, Let $\PI_0^{(2)}$ denote the center of $\nu_1$ in $R^{(1)}\left[\displaystyle\frac ab\right]$. Since $\PI_0^{(2)}\subset\MI''$, we have
\begin{equation}
R^{(1)}\left[\displaystyle\frac ab\right]_{\PI_0^{(2)}}=R^{(2)}_{\PI^{(2)}}\label{eq:delocalization}
\end{equation}
by Lemma \ref{localization} (2). We have $R^{(1)}\subset R_{\PI}\subset\widetilde{R}$ by (\ref{eq:R1inRlocalized}) and $\displaystyle\frac ab\in\widetilde{R}$ by definition, so
\[
R^{(1)}\left[\displaystyle\frac ab\right]\subset\widetilde{R}.
\]
Hence
\begin{equation}
R^{(1)}\left[\frac ab\right]_{\PI_0^{(2)}}\subset\widetilde{R}\label{eq:inRtilde}
\end{equation}
by Lemma \ref{localization} (1). Combining (\ref{eq:inRtilde}) with (\ref{eq:delocalization}), we obtain
\begin{equation}
R^{(2)}_{\PI^{(2)}}\subset\widetilde{R}.\label{eq:R2inRtilde}
\end{equation}
This completes the proof.
\end{proof}

If in the Lemma above we had $\nu(a)< \nu(b)$, then we would have $\nu_1(a)=\nu_1(b)$. Indeed, since $\nu(a)< \nu(b)$ we get
\[
\frac{b}{a}\in \MI_\nu\subseteq\VR_\nu\subseteq\VR_{\nu_1}
\]
which guarantees that $\nu_1(b)\leq\nu_1(a)$. By definition of a local blowing up we have that $\nu_1(a)\leq\nu_1(b)$, so $\nu_1(a)=\nu_1(b)$. From Lemma \ref{Le_1} we conclude that $\widetilde{R}=R\left[\displaystyle\frac{b}{a}\right]_{\widetilde{\PI'}}$ where $\widetilde{\PI'}=\MI_{\nu_1}\cap R\left[\displaystyle\frac{b}{a}\right]$. Consider now the sequence of local blowing ups
\[
R\lra R^{(1)}=R[a,b]_{\MI'}\lra R^{(1)}\left[\frac{b}{a}\right]_{\MI''}
\]
with respect to $\nu$, where $\MI'=\MI_\nu\cap R[a,b]$ and $\MI''=\MI_\nu\cap R^{(1)}\left[\displaystyle\frac{b}{a}\right]$ and let $\PI^{(2)}=\MI_{\nu_1}\cap R^{(2)}$. By the previous Lemma we conclude again that $\widetilde{R}=R^{(2)}_{\PI^{(2)}}$. We have then proved the following Corollary.

\begin{Cor}\label{Cor_2}
For every simple local blowing up of $R_\PI$
\[
R_\PI\lra \widetilde{R}
\]
with respect to $\nu_1$ there exists a sequence of local blowing ups of $R$
\[
R\lra R^{(1)}\lra R^{(2)}
\]
with respect to $\nu$ such that $\widetilde{R}=R^{(2)}_{\PI^{(2)}}$, where $\PI^{(2)}$ is the center of $\nu_1$ in $R^{(2)}$.
\end{Cor}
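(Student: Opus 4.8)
The plan is to reduce the statement to Lemma~\ref{Le_4} by a short case analysis. A simple local blowing up of $R_\PI$ with respect to $\nu_1$ has the form $R_\PI\lra\widetilde R=R_\PI\left[\frac ab\right]_{\widetilde\PI}$ with $a,b\in R_\PI$, $\nu_1(b)\le\nu_1(a)$, and $\widetilde\PI=\MI_{\nu_1}\cap R_\PI\left[\frac ab\right]$. First I would write $a$ and $b$ over a common denominator lying in $R\setminus\PI$; since such a denominator has both $\nu_1$-value and $\nu$-value zero, this lets me assume $a,b\in R$ without altering the ring $R_\PI\left[\frac ab\right]$ or the signs of $\nu_1(a)-\nu_1(b)$ and $\nu(a)-\nu(b)$. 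Then I split into two cases according to the sign of $\nu(a)-\nu(b)$.

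If $\nu(a)\ge\nu(b)$, the hypotheses of Lemma~\ref{Le_4} hold verbatim, and it delivers the sequence $R\lra R^{(1)}=R[a,b]_{\MI'}\lra R^{(2)}=R^{(1)}\left[\frac ab\right]_{\MI''}$ of local blowing ups with respect to $\nu$ together with the identification $\widetilde R=R^{(2)}_{\PI^{(2)}}$. If instead $\nu(a)<\nu(b)$, I first note that $\frac ba\in\MI_\nu\subseteq\VR_\nu\subseteq\VR_{\nu_1}$, whence $\nu_1(b)\ge\nu_1(a)$; together with the inequality $\nu_1(b)\le\nu_1(a)$ built into the blowing up of $R_\PI$ this forces $\nu_1(a)=\nu_1(b)$. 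Applying Lemma~\ref{Le_1} to $R_\PI$, the valuation $\nu_1$ and the ideal $aR_\PI+bR_\PI$ (both of whose generators now have minimal value) gives $\widetilde R=R_\PI\left[\frac ab\right]_{\widetilde\PI}=R_\PI\left[\frac ba\right]_{\widetilde{\PI'}}$, where $\widetilde{\PI'}=\MI_{\nu_1}\cap R_\PI\left[\frac ba\right]$. Since now $\nu(b)>\nu(a)$, Lemma~\ref{Le_4} applies with the roles of $a$ and $b$ exchanged and produces a sequence $R\lra R^{(1)}=R[a,b]_{\MI'}\lra R^{(2)}=R^{(1)}\left[\frac ba\right]_{\MI''}$ with $R^{(2)}_{\PI^{(2)}}=R_\PI\left[\frac ba\right]_{\widetilde{\PI'}}=\widetilde R$, which is again what is wanted.

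I do not anticipate a real obstacle: all the substantive content --- the delicate commutation between localizing at $\PI$, adjoining $a,b$, and then adjoining $\frac ab$ --- is already encapsulated in Lemma~\ref{Le_4}. The only points requiring a bit of care are the preliminary reduction to $a,b\in R$ (checking that it preserves the two sign conditions) and, in the second case, verifying that Lemma~\ref{Le_1} is applicable, which is immediate because the ideal in play has exactly two generators of equal, hence minimal, $\nu_1$-value.
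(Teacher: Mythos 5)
Your proof is correct and follows essentially the same route as the paper: the case $\nu(a)\geq\nu(b)$ is Lemma \ref{Le_4} verbatim, and in the case $\nu(a)<\nu(b)$ the paper likewise deduces $\nu_1(a)=\nu_1(b)$, invokes Lemma \ref{Le_1} to replace $\frac{a}{b}$ by $\frac{b}{a}$, and then applies Lemma \ref{Le_4} with the roles of $a$ and $b$ reversed. The only slip is your claim that the common denominator $s\in R\setminus\PI$ has $\nu$-value zero (it need not, e.g.\ if $s\in\MI\setminus\PI$), but this is harmless because $s$ cancels in the differences $\nu(a)-\nu(b)$ and $\nu_1(a)-\nu_1(b)$, so the sign conditions are preserved exactly as you assert.
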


\begin{Cor}
Let
\begin{equation}\label{eq_3}
R_\PI\lra \widetilde{R}^{(1)}\lra\cdots \lra \widetilde{R}^{(r)}
\end{equation}
be a sequence of local blowing ups with respect to $\nu_1$. Then there exists a sequence of local blowing ups
\[
R\lra R^{(1)}\lra\cdots\lra R^{(n)}
\]
with respect to $\nu$ such that $R^{(n)}_{\PI^{(n)}}=\widetilde{R}^{(r)}$, where $\PI^{(n)}$ is the center $\MI_{\nu_1}\cap R^{(n)}$ of $\nu_1$ in $R^{(n)}$. In particular, if $\widetilde{R}^{(r)}$ is regular then $R^{(n)}_{\PI^{(n)}}$ is regular.
\end{Cor}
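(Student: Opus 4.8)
The plan is to reduce to the case of simple local blowing ups and then induct on $r$, applying Corollary~\ref{Cor_2} once at each stage. First, by Lemma~\ref{Le_3} each of the local blowing ups in the sequence~(\ref{eq_3}) decomposes into a finite composition of \emph{simple} local blowing ups with respect to $\nu_1$; replacing~(\ref{eq_3}) by this refinement (and renaming), we may assume that every arrow $\widetilde{R}^{(i-1)}\lra\widetilde{R}^{(i)}$, where we set $\widetilde{R}^{(0)}:=R_\PI$, is a simple local blowing up with respect to $\nu_1$.

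Now I would argue by induction on $r$, the case $r=0$ being trivial with $n=0$. Assume we have already produced a sequence of local blowing ups $R\lra R^{(1)}\lra\cdots\lra R^{(m)}$ with respect to $\nu$ such that $R^{(m)}_{\PI^{(m)}}=\widetilde{R}^{(r-1)}$, where $\PI^{(m)}=\MI_{\nu_1}\cap R^{(m)}$ is the center of $\nu_1$ in $R^{(m)}$. Each local blowing up produces a finitely generated $R$-subalgebra of $K$ followed by a localization, so $R^{(m)}$ is a Noetherian local domain with fraction field $K$; moreover $R^{(m)}\subseteq\VR_\nu$ and $\nu$ is centered at $R^{(m)}$, the fixed decomposition $\nu=\nu_1\circ\nu_2$ still applies, and $\PI^{(m)}$ is the center of $\nu_1$ in $R^{(m)}$. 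Thus $R^{(m)}$ satisfies all the standing hypotheses of this section, and by the induction hypothesis the given simple local blowing up $\widetilde{R}^{(r-1)}\lra\widetilde{R}^{(r)}$ is a simple local blowing up of $R^{(m)}_{\PI^{(m)}}$ with respect to $\nu_1$.

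Applying Corollary~\ref{Cor_2} with $R^{(m)}$ in place of $R$, we obtain a sequence of local blowing ups $R^{(m)}\lra R^{(m+1)}\lra R^{(m+2)}$ with respect to $\nu$ such that $R^{(m+2)}_{\PI^{(m+2)}}=\widetilde{R}^{(r)}$, where $\PI^{(m+2)}=\MI_{\nu_1}\cap R^{(m+2)}$. Concatenating this with the sequence coming from the induction hypothesis closes the induction; after treating all $r$ simple blowing ups we obtain the desired sequence $R\lra R^{(1)}\lra\cdots\lra R^{(n)}$ with $R^{(n)}_{\PI^{(n)}}=\widetilde{R}^{(r)}$. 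The final assertion is then immediate, since $R^{(n)}_{\PI^{(n)}}$ and $\widetilde{R}^{(r)}$ are literally the same ring.

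The step I expect to need the most care is verifying that $R^{(m)}$ inherits every hypothesis required to invoke Corollary~\ref{Cor_2} — in particular that $R^{(m)}$ is Noetherian (so that $R^{(m)}_{\PI^{(m)}}$ is meaningful and Lemma~\ref{Le_3} is available for it) and that the center of $\nu_1$ in $R^{(m)}$ is exactly the prime $\PI^{(m)}$ occurring in the induction hypothesis, so that $\widetilde{R}^{(r-1)}\lra\widetilde{R}^{(r)}$ really is a local blowing up of $R^{(m)}_{\PI^{(m)}}$ with respect to $\nu_1$. Once this bookkeeping is in place, the argument is just a repeated application of the previous Corollary.
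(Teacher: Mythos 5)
Your proposal is correct and follows exactly the paper's argument: reduce to simple local blowing ups via Lemma \ref{Le_3}, then induct on $r$, applying Corollary \ref{Cor_2} at each stage to the ring produced so far. You simply spell out more of the bookkeeping (Noetherianity of the intermediate rings and the identification of the center of $\nu_1$) than the paper, which states this proof in two sentences.
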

\begin{proof}
From Lemma \ref{Le_3} we may assume that every local blowing up in the sequence (\ref{eq_3}) is simple. Applying Corollary \ref{Cor_2} to each of these simple local blowing ups and using induction on $r$, we get the desired sequence.
\end{proof}

We will now prove some facts about $\nu_2$. Let
\[
\phi:R/\PI\lra\overline{R}
\]
be an isomorphism of local rings and denote $\phi(a+\PI)$ by $\overline{a}$. Let $\overline{\nu}_2=\nu_2\circ\phi^{-1}$ and take elements $a,b\in R\setminus\PI$ such that $\nu(a)=\overline{\nu}_2(\overline{a})\geq \overline{\nu}_2(\overline{b})=\nu(b)$. Consider the rings
\[
R'=R\left[\frac{a}{b}\right]\textnormal{ and }\overline{R}'=\overline{R}\left[\frac{\overline{a}}{\overline{b}}\right]
\]
and the ideals $\MI'=R'\cap \MI_\nu$ and $\overline{\MI}'=\overline{R}'\cap\MI_{\overline{\nu}_2}$. Let
\[
R^{(1)}=R'_{\MI'}\textnormal{ and }\overline{R}^{(1)}=\overline{R}'_{\overline{\MI}'}
\]
and $\PI^{(1)}=R'_{\MI'} \cap \MI_{\nu_1}$.

\begin{Lema}\label{Le_7}
In the above situation we have $\overline{R}^{(1)}\cong R^{(1)}/\PI^{(1)}$ and $R_\PI= R^{(1)}_{\PI^{(1)}}$.
\end{Lema}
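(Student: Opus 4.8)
The plan is to establish the two assertions in order: first the equality of localizations $R_\PI=R^{(1)}_{\PI^{(1)}}$, which is then used to prove the isomorphism $\overline{R}^{(1)}\cong R^{(1)}/\PI^{(1)}$.

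For the equality, the key observation is that $a,b\in R\setminus\PI$ are units in $R_\PI$, so $\frac ab\in R_\PI$ and hence $R'=R\left[\frac ab\right]\subseteq R_\PI$; moreover $\nu(a)\geq\nu(b)$ gives $R'\subseteq\VR_\nu\subseteq\VR_{\nu_1}$. Writing $\PI_0^{(1)}=R'\cap\MI_{\nu_1}$ for the center of $\nu_1$ in $R'$, I would first check that $\PI_0^{(1)}\subseteq\MI'$ (since $z\in\MI_{\nu_1}\cap\VR_\nu$ forces $\nu(z)>0$) and that every $y\in R'\setminus\MI'$ satisfies $\nu(y)=0$, hence $\nu_1(y)=0$; the latter forces the center of $\nu_1$ in $R^{(1)}=R'_{\MI'}$ to be $\PI^{(1)}=\PI_0^{(1)}R'_{\MI'}$, so that $R^{(1)}_{\PI^{(1)}}=R'_{\PI_0^{(1)}}$ by transitivity of localization. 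Then, applying Lemma \ref{localization}(1) along the chain $R\hookrightarrow R'\hookrightarrow R_\PI$ (all inside $\VR_{\nu_1}$, with centers $\PI$, $\PI_0^{(1)}$ and $\PI R_\PI$ for $\nu_1$), one obtains $R_\PI\subseteq R'_{\PI_0^{(1)}}\subseteq(R_\PI)_{\PI R_\PI}=R_\PI$, hence $R^{(1)}_{\PI^{(1)}}=R'_{\PI_0^{(1)}}=R_\PI$.

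For the isomorphism, I would use the composite ring homomorphism $\Psi\colon R'\hookrightarrow R_\PI\twoheadrightarrow\kappa(\PI)\xrightarrow{\phi}Quot(\overline{R})$, where the middle arrow is the residue map of the local ring $R_\PI$ and $\phi$ has been extended to fraction fields. Since $R'\subseteq R_\PI$, the restriction of the residue map to $R'$ has kernel $R'\cap\PI R_\PI=R'\cap\MI_{\nu_1}=\PI_0^{(1)}$, and $\phi$ is injective, so $\ker\Psi=\PI_0^{(1)}$; and since $\Psi\left(\frac ab\right)=\frac{\overline{a}}{\overline{b}}$ we get $\Psi(R')=\overline{R}\left[\frac{\overline{a}}{\overline{b}}\right]=\overline{R}'$. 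Thus $\Psi$ induces an isomorphism $R'/\PI_0^{(1)}\cong\overline{R}'$, and combined with $\PI^{(1)}=\PI_0^{(1)}R'_{\MI'}$ and $\PI_0^{(1)}\subseteq\MI'$ this gives
\[
R^{(1)}/\PI^{(1)}=\left(R'/\PI_0^{(1)}\right)_{\MI'/\PI_0^{(1)}}\cong\overline{R}'_{\Psi(\MI'/\PI_0^{(1)})}.
\]
It then remains to check that $\Psi(\MI'/\PI_0^{(1)})=\overline{\MI}'$, i.e.\ that $\Psi$ carries the center of $\nu$ in $R'$ onto the center of $\overline{\nu}_2$ in $\overline{R}'$: for $x\in R'$ with $\nu_1(x)=0$ the defining relation of the decomposition $\nu=\nu_1\circ\nu_2$ gives $\nu(x)>0\Llr\nu_2(\overline{x})>0$ (with $\overline{x}$ the residue of $x$ in $K_{\nu_1}\supseteq\kappa(\PI)$), while $\Psi(x)=\phi(\overline{x})\in\MI_{\overline{\nu}_2}$ if and only if $\nu_2(\overline{x})>0$ by the definition $\overline{\nu}_2=\nu_2\circ\phi^{-1}$; for $x\in\PI_0^{(1)}$ both conditions hold trivially. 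Hence $R^{(1)}/\PI^{(1)}\cong\overline{R}^{(1)}$.

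The main difficulty is entirely one of bookkeeping: one must keep careful track of how the three valuations $\nu$, $\nu_1$ and $\nu_2$ restrict and descend along the inclusions $R\subseteq R'\subseteq R_\PI$, the localization at $\MI'$, and the passage to the quotient by $\PI_0^{(1)}$, and in particular verify the two equalities $\PI^{(1)}=\PI_0^{(1)}R'_{\MI'}$ and $\Psi(\MI'/\PI_0^{(1)})=\overline{\MI}'$. Everything reduces to the repeated use of the fact that $y\notin\MI'$ forces $\nu_1(y)=0$ (hence $y$ is a unit in $\VR_{\nu_1}$), which is what makes the lexicographic structure of the composite valuation cooperate; no deeper input is needed.
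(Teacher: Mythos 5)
Your proof is correct and follows essentially the same route as the paper: the localization equality via Lemma \ref{localization}, and the isomorphism via a surjection $R'\twoheadrightarrow\overline{R}'$ induced by the residue map of $\nu_1$, whose kernel is identified with $\PI_0^{(1)}$ before localizing. The only cosmetic difference is that you compute the kernel by factoring through $R_\PI\to\kappa(\PI)$ at the level of $R'$, whereas the paper identifies the analogous map on $R^{(1)}$ as the restriction of the surjection $\Phi_\Delta:\VR_\nu\to\VR_{\nu_2}$ of Remark \ref{RnusurjectstoRnu2}; both rest on the same fact that $\PI R_\PI=R_\PI\cap\MI_{\nu_1}$.
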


\begin{proof}
To prove that $R_\PI= R^{(1)}_{\PI^{(1)}}$ it is enough to prove that
\begin{equation}
R_\PI\supseteq R^{(1)}_{\PI^{(1)}}\label{eq:RPinR1P1}
\end{equation}
 because $R_\PI\subseteq R^{(1)}_{\PI^{(1)}}$ is trivial. To prove (\ref{eq:RPinR1P1}), note that $b\notin\PI$ by definition, hence $\displaystyle\frac ab\in R_\PI$, so $R\left[\displaystyle\frac ab\right]\subset R_\PI$. Now the inclusion (\ref{eq:RPinR1P1}) follows from Lemma \ref{localization} (1).
 
 To prove the first statement of the Lemma, first note that we have a natural surjective homomorphism $R\rightarrow\overline{R}$. We extend it to a surjective homomorphism $\Phi:R'\rightarrow\overline{R}'$ by sending $\displaystyle\frac{a}{b}$ to $\displaystyle\frac{\overline{a}}{\overline{b}}$. We have $\Phi(\MI')=\overline{\MI}'$ and $\Phi(R'\setminus\MI')=\overline{R}'\setminus\overline{\MI}'$, hence $\Phi$ extends to a surjective homomorphism $R^{(1)}\lra \overline{R}^{(1)}$ of localizations. By abuse of notation, we denote this new homomorphism also by $\Phi$.

It remains to show that $\ker (\Phi)=\PI^{(1)}$. By definitions, we have injective local homomorphisms $R^{(1)}\hookrightarrow\VR_\nu$ and $\overline{R}^{(1)}\hookrightarrow\VR_{\nu_2}$, and the homomorphism $\Phi$ is nothing but the restriction to $R^{(1)}$ of the homomorphism $\Phi_\Delta$ of Remark \ref{RnusurjectstoRnu2}. Hence
$$
\ker(\Phi)=\ker(\Phi_\Delta)\cap R^{(1)}=\PI_\Delta\cap R^{(1)}=\left(\MI_{\nu_1}\cap\VR_\nu\right)\cap R^{(1)}=\MI_{\nu_1}\cap R^{(1)}=\PI^{(1)},
$$
as desired.
\end{proof}

\begin{Def}
The simple local blowing up
\[
\pi:R\lra R\left[\frac{a}{b}\right]_{\MI'}
\]
constructed in the Lemma above is called the \textbf{lifting} of the simple local blowing up $\overline{\pi}$ from $R/\PI$ to $R$.
\end{Def}

\begin{Cor}\label{Cor_1}
Take a sequence of local blowing ups
\[
\left(R/\PI,\MI/\PI\right)\lra\left(\overline{R}^{(1)},\overline{\MI}^{(1)}\right)\lra\cdots \lra\left(\overline{R}^{(r)},\overline{\MI}^{(r)}\right),
\]
with respect to $\nu_2$. Then there exists a sequence of local blowing ups
\[
\left(R,\MI\right)\lra\left(R^{(1)},\MI^{(1)}\right)\lra\cdots\lra\left(R^{(n)},\MI^{(n)}\right),
\]
with respect to $\nu$ such that $R^{(n)}_{\PI^{(n)}}=R_\PI$ and $R^{(n)}/{\PI^{(n)}}\cong \overline{R}^{(r)}$, where $\PI^{(n)}=R^{(n)}\cap\MI_{\nu_1}$. In particular, if $R_\PI$ and $\overline{R}^{(r)}$ are regular, then so are $R^{(n)}_{\PI^{(n)}}$ and $R^{(n)}/{\PI^{(n)}}$.
\end{Cor}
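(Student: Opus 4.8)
The plan is to reduce to the case of simple local blowing ups and then run an induction on the length $r$ of the given sequence. First I would apply Lemma~\ref{Le_3} to each blowing up $\overline R^{(i-1)}\lra\overline R^{(i)}$ so as to assume, without loss of generality, that every blowing up with respect to $\nu_2$ occurring in the sequence is \emph{simple}; this only lengthens the sequence and leaves $\overline R^{(r)}$ unchanged. The base case $r=0$ is trivial, taking $n=0$, $R^{(0)}=R$, $\PI^{(0)}=\PI$.

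For the inductive step I would apply the inductive hypothesis --- in the strengthened form discussed below --- to the truncated sequence $(R/\PI,\MI/\PI)\lra\overline R^{(1)}\lra\cdots\lra\overline R^{(r-1)}$, producing a sequence of local blowing ups $(R,\MI)\lra\cdots\lra(R^{(m)},\MI^{(m)})$ with respect to $\nu$ such that $R^{(m)}_{\PI^{(m)}}=R_\PI$ and $R^{(m)}/\PI^{(m)}\cong\overline R^{(r-1)}$, where $\PI^{(m)}=R^{(m)}\cap\MI_{\nu_1}$. Now $R^{(m)}$ is again a Noetherian local ring, $\nu=\nu_1\circ\nu_2$ is centered at it, and $\PI^{(m)}$ is the center of $\nu_1$ in $R^{(m)}$, so Lemma~\ref{Le_7} applies with $R^{(m)}$ in the role of $R$. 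Transporting the simple blowing up $\overline R^{(r-1)}\lra\overline R^{(r)}$ through the isomorphism $R^{(m)}/\PI^{(m)}\cong\overline R^{(r-1)}$, I lift it to a simple local blowing up $R^{(m)}\lra R^{(m+1)}$ with respect to $\nu$ as in Lemma~\ref{Le_7}; that lemma then gives $R^{(m+1)}_{\PI^{(m+1)}}=R^{(m)}_{\PI^{(m)}}=R_\PI$ and $R^{(m+1)}/\PI^{(m+1)}\cong\overline R^{(r)}$. Setting $n=m+1$ completes the induction, and the ``in particular'' clause is immediate, since then $R^{(n)}_{\PI^{(n)}}=R_\PI$ and $R^{(n)}/\PI^{(n)}\cong\overline R^{(r)}$.

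I expect the main obstacle to be making the inductive statement precise enough to feed into Lemma~\ref{Le_7}. That lemma needs the isomorphism $R^{(m)}/\PI^{(m)}\cong\overline R^{(r-1)}$ to carry the valuation induced by $\nu_2$ on $\kappa(\PI^{(m)})=Quot\bigl(R^{(m)}/\PI^{(m)}\bigr)$ to the valuation with respect to which $\overline R^{(r-1)}\lra\overline R^{(r)}$ is performed; otherwise the inequality $\nu(a)\ge\nu(b)$ required to form the lift need not hold. I would therefore strengthen the induction to record that the isomorphism in play is always the one induced by the canonical surjection $\Phi_\Delta:\VR_\nu\to\VR_{\nu_2}$ of Remark~\ref{RnusurjectstoRnu2} --- equivalently, that $R^{(m)}/\PI^{(m)}$ and $\overline R^{(r-1)}$ are identified as subrings of $\VR_{\nu_2}$. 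This invariant is preserved: by the proof of Lemma~\ref{Le_7} the new isomorphism $R^{(m+1)}/\PI^{(m+1)}\cong\overline R^{(r)}$ is again a restriction of $\Phi_\Delta$, and since $\nu_2\bigl(\Phi_\Delta(x)\bigr)=\nu(x)$ whenever $\nu_1(x)=0$, each value comparison needed at the next stage transfers from the corresponding comparison for $\nu_2$. Once this bookkeeping is in place, the induction runs mechanically.
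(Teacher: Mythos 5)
Your proposal is correct and follows essentially the same route as the paper: reduce to simple local blowing ups via Lemma~\ref{Le_3}, then induct on the length of the sequence, lifting one simple blowing up at a time via Lemma~\ref{Le_7} (which the paper's proof cites, by an evident typo, as Lemma~\ref{Le_4}). Your extra bookkeeping --- identifying $R^{(m)}/\PI^{(m)}$ with $\overline R^{(r-1)}$ as subrings of $\VR_{\nu_2}$ via $\Phi_\Delta$ so that the value comparisons transfer --- is a point the paper leaves implicit, and it is a worthwhile clarification rather than a deviation.
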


\begin{proof}
Since every local blowing up can be decomposed as a finite sequence of simple local blowing ups (see Lemma \ref{Le_3}), we may assume that all local blowing ups in the sequence
\[
\left(\overline{R},\overline{\MI}\right)\lra \left(\overline{R}^{(1)},\overline{\MI}^{(1)}\right)\lra\cdots\lra \left(\overline{R}^{(r)},\overline{\MI}^{(r)}\right)
\]
are simple. We will prove by induction on $k,\ 1\leq k\leq r$ that we can lift the simple local blowing up
\[
\overline{\pi}_k:\overline{R}^{(k-1)}\lra \overline{R}^{(k)}
\]
$\left(\overline{R}^{(0)}:=R/\PI\right)$ to a simple local blowing up
\[
\pi_k:R^{(k-1)}\lra R^{(k)}
\]
$\left(R^{(0)}:=R\right)$ with respect to $\nu$ such that $R^{(k)}_{\PI^{(k)}}=R_\PI$ and $\overline{R}^{(k)}\cong R^{(k)}/\PI^{(k)}$.
For $k=1$ we apply Lemma \ref{Le_4} with $R=R$ and $\overline{R}=R/\PI$. Suppose now that $k>1$ and that $R^{(k-1)}_{\PI^{(k-1)}}=R_\PI$ and $\overline{R}^{(k-1)}\cong R^{(k-1)}/\PI^{(k-1)}$. Applying Lemma \ref{Le_4} to $R=R^{(k-1)}$ and $\overline{R}=\overline{R}^{(k-1)}$, we get $R^{(k)}_{\PI^{(k)}}=R^{(k-1)}_{\PI^{(k-1)}}=R_\PI$ and $\overline{R}^{(k)}\cong R^{(k)}/\PI^{(k)}$. Therefore, $R^{(r)}_{\PI^{(r)}}=R_\PI$ and $R^{(r)}/\PI^{(r)}\cong \overline{R}^{(r)}$, as desired.
\end{proof}

We will now assume that both $R_\PI$ and $R/\PI$ are regular and will study the effects of blowing up $R$ with respect to $\nu$.
\begin{Lema}\label{Le_5}
Let $R$ be a domain and $\PI$ a prime ideal of $R$ such that $R_\PI$ is regular. Then there exist $y_1,\ldots,y_r\in\PI$ that form a regular system of parameters for $R_\PI$.
\end{Lema}
\begin{proof}
Since $R_\PI$ is regular, there exist $\widetilde{y}_1,\ldots,\widetilde{y}_r\in \PI R_\PI$ which form a regular system of parameters for $R_\PI$. By definition of $\PI R_\PI$, there exist $\beta_i\notin\PI$ and $y_i\in \PI$ such that
\[
\widetilde{y}_i=\frac{y_i}{\beta_i},\qquad 1\leq i\leq r.
\]
Then $\beta_i$ is a unit in $R_\PI$ and therefore $\left(\widetilde{y}_1,\ldots,\widetilde{y}_r\right)R_\PI=(y_1,\ldots,y_r)R_\PI$.
\end{proof}

\begin{Lema}\label{Le_6}
Assume that $R_\PI$ and $R/\PI$ are regular, take $y_1,\ldots,y_r\in\PI$ which form a regular system of parameters for $R_\PI$ and $x_1,\ldots,x_t\in \MI\setminus\PI$ such that $(x_1+\PI,\ldots,x_t+\PI)$ form a regular system of parameters for $R/\PI$. Fix $a\in\MI\setminus\PI$ and let
\[
\pi:R\lra R^{(1)}
\]
be the local blowing up of $R$ with respect to $\nu$ along the ideal $(a,y_1,\ldots,y_r,y_{r+1},\ldots,y_{r+s})$ for some $y_{r+1},\ldots,y_{r+s}\in \PI$. Let
$$
y_1^{(1)}=\displaystyle\frac{y_1}{a},\ldots,y_r^{(1)}=\displaystyle\frac{y_r}{a}
$$
and $\PI^{(1)}=\MI_{\nu_1}\cap R^{(1)}$. Then we have:
\begin{description}
\item[(i)] $R^{(1)}/\PI^{(1)}$ is regular and $\left(x_1+\PI^{(1)},\ldots,x_t+\PI^{(1)}\right)$ is a regular system of parameters for $R^{(1)}/\PI^{(1)}$ and
\item[(ii)] $R^{(1)}_{\PI^{(1)}}$ is regular and $\left(y_1^{(1)},\ldots,y_r^{(1)}\right)$ is a regular system of parameters for $R^{(1)}_{\PI^{(1)}}$.
\end{description}
\end{Lema}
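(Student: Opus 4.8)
The plan is to establish (ii) and (i) separately, reducing both to Lemma~\ref{localization} together with two elementary observations about $a$: since $a\in\MI\setminus\PI$ one has $\nu_1(a)=0$, and (as recalled above) $\nu(a)<\nu(y)$ for every $y\in\PI$. In particular $a$ realizes the minimum of $\nu$ on the ideal $(a,y_1,\ldots,y_{r+s})$, so that
\[
R^{(1)}=R'_{\MI'},\qquad R':=R\left[\frac{y_1}{a},\ldots,\frac{y_{r+s}}{a}\right],\qquad \MI':=\MI_\nu\cap R'.
\]
Put $\overline{\PI}':=\MI_{\nu_1}\cap R'$. As in the proofs above, one checks that $\overline{\PI}'\subseteq\MI'$ (for $x\in R'\subseteq\VR_\nu$ the inequality $\nu_1(x)>0$ forces $\nu(x)>0$), that $R\cap\PI^{(1)}=R\cap\MI_{\nu_1}=\PI$, and that $a\notin\MI_{\nu_1}$, whence $a\notin\PI^{(1)}$.

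Part (ii). Since $a\notin\PI$, it is a unit of $R_\PI$, so $\frac{y_i}{a}\in R_\PI$ for all $i$ and hence $R\subseteq R'\subseteq R_\PI$. Applying Lemma~\ref{localization}(1) first to $R\hookrightarrow R'$ and then to $R'\hookrightarrow R_\PI$, both with respect to $\nu_1$ --- whose centers in $R$, $R'$, $R_\PI$ are respectively $\PI$, $\overline{\PI}'$, $\PI R_\PI$ --- gives $R_\PI\subseteq R'_{\overline{\PI}'}\subseteq R_\PI$, i.e. $R'_{\overline{\PI}'}=R_\PI$. Since $\overline{\PI}'\subseteq\MI'$, Lemma~\ref{localization}(2), applied with $\nu_1$ and $S=R'\setminus\MI'$ (so that $R'_S=R^{(1)}$), gives $R'_{\overline{\PI}'}=R^{(1)}_{\PI^{(1)}}$. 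Therefore $R^{(1)}_{\PI^{(1)}}=R_\PI$, which is regular by hypothesis. Finally, $\nu_1\!\left(\frac{y_i}{a}\right)=\nu_1(y_i)>0$ shows that $y_1^{(1)},\dots,y_r^{(1)}\in\PI^{(1)}$, and since $a$ is a unit of $R_\PI=R^{(1)}_{\PI^{(1)}}$, these $r=\dim R_\PI$ elements generate the maximal ideal of $R^{(1)}_{\PI^{(1)}}$ --- the same ideal as $y_1,\ldots,y_r$ --- and so form a regular system of parameters.

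Part (i). The point is that the blowing up becomes trivial modulo $\PI^{(1)}$. As $\PI^{(1)}$ is prime, $R^{(1)}/\PI^{(1)}$ is a domain; the image $\overline{a}$ of $a$ in it is nonzero (because $a\notin\PI^{(1)}$), hence a nonzerodivisor, and from $a\cdot\frac{y_i}{a}=y_i\in\PI\subseteq\PI^{(1)}$ we conclude that each $\frac{y_i}{a}$ maps to $0$ in $R^{(1)}/\PI^{(1)}$. Consequently $R'/\overline{\PI}'$, which is generated over the image of $R$ by the images of the $\frac{y_i}{a}$, equals that image, namely $R/(R\cap\overline{\PI}')=R/\PI$. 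Since $R^{(1)}=R'_{\MI'}$ with $\overline{\PI}'\subseteq\MI'$, the ring $R^{(1)}/\PI^{(1)}$ is a localization of $R'/\overline{\PI}'\cong R/\PI$; it is moreover local (as $\PI^{(1)}\subseteq\MI^{(1)}$), with maximal ideal contracting to $\MI/\PI$ along the inclusion $R/\PI\hookrightarrow R^{(1)}/\PI^{(1)}$. By the elementary fact that a localization of a domain which happens to be a local ring coincides with the localization at the contraction of its maximal ideal, we obtain $R^{(1)}/\PI^{(1)}=(R/\PI)_{\MI/\PI}=R/\PI$. Under this identification $x_j+\PI^{(1)}$ corresponds to $x_j+\PI$ for each $j$, so $\left(x_1+\PI^{(1)},\ldots,x_t+\PI^{(1)}\right)$ is a regular system of parameters of $R^{(1)}/\PI^{(1)}$, which is thus regular.

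I expect the last step of (i) to be the main obstacle: upgrading the statement ``$R^{(1)}/\PI^{(1)}$ is a localization of $R/\PI$'' to the equality $R^{(1)}/\PI^{(1)}=R/\PI$. This rests on two ingredients: the nonzerodivisor argument, which kills every generator $\frac{y_i}{a}$ of $R'$ over $R$ once we pass modulo $\PI^{(1)}$; and the elementary fact that a localization $T^{-1}A$ of a domain $A$ that is itself a local ring must equal $A_{\mathfrak{p}}$, where $\mathfrak{p}$ is the contraction to $A$ of the maximal ideal of $T^{-1}A$. By contrast, part (ii) is essentially a bookkeeping exercise with Lemma~\ref{localization}, the only substantive input being that $a$ is a unit of $R_\PI$.
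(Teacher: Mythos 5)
Your proof is correct. Part (ii) is essentially the paper's argument: the authors also obtain $R^{(1)}_{\PI^{(1)}}=R_\PI$ from Lemma \ref{localization} and then use that $a$ is a unit of $R_\PI$ (because $\nu_1(a)=0$) to pass from $(y_1,\ldots,y_r)$ to $\left(\frac{y_1}{a},\ldots,\frac{y_r}{a}\right)$. Part (i), however, takes a genuinely different route. The paper proves directly that the composite $R\hookrightarrow R^{(1)}\twoheadrightarrow R^{(1)}/\PI^{(1)}$ is surjective with kernel $\PI$, by an explicit computation: a general element $\frac{p}{q}\in R^{(1)}$ is split as $p=p_0+p_1$, $q=q_0+q_1$ with $p_0,q_0\in R$ and $p_1,q_1$ in the ideal generated by the $\frac{y_i}{a}$; the inequalities $\nu_1\left(\frac{y_i}{a}\right)>0$ force $\nu(q_0)=0$, so $q_0$ is a unit of $R$ and $\frac{p}{q}\equiv\frac{p_0}{q_0}\pmod{\PI^{(1)}}$. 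You instead first identify the unlocalized quotient $R'/\overline{\PI}'$ with $R/\PI$ (your nonzerodivisor argument for killing the $\frac{y_i}{a}$ is equivalent to the paper's valuation-theoretic one, since $\nu_1\left(\frac{y_i}{a}\right)=\nu_1(y_i)>0$), and then dispose of the localization by the general fact that a localization of a domain which is itself local equals the localization at the contraction of its maximal ideal. Your version is cleaner and isolates where the hypothesis enters (the generators of $R'$ over $R$ all lie in the center of $\nu_1$), at the cost of invoking one extra piece of standard commutative algebra, namely that $\PI^{(1)}=\overline{\PI}'R^{(1)}$ so that $R^{(1)}/\PI^{(1)}$ really is a localization of $R'/\overline{\PI}'$; the paper's computation, though fussier, produces the preimage $\frac{p_0}{q_0}$ explicitly and never leaves the ring $R^{(1)}$. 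Both arguments are complete and correct.
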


\begin{proof}
\begin{description}
\item[(i)] We want to prove first that $R^{(1)}/\PI^{(1)}\cong R/\PI$. Let $i:R\lra R^{(1)}$ be the natural inclusion and $\pi:R^{(1)}\lra R^{(1)}/\PI^{(1)}$ be the canonical epimorphism. Consider the mapping $\phi=\pi\circ i$. We have to prove that $\phi$ is surjective and that $\ker \phi=\PI$. Take an element $\displaystyle\frac{p}{q}\in R^{(1)}$, so $p,q\in R\left[\displaystyle\frac{y_1}{a},\ldots,\frac{y_{r+s}}{a}\right]$ and $\nu(q)=0$. Write
\[
p=p_0+p_1\textnormal{ and } q=q_0+q_1
\]
where $p_0,q_0\in R$ and
\[
p_1=\frac{y_1}{a}\overline{p}_1+\ldots+\frac{y_{r+s}}{a}\overline{p}_{r+s} \textnormal{ and } q_1=\frac{y_1}{a}\overline{q}_1+\ldots+\frac{y_{r+s}}{a}\overline{q}_{r+s}
\]
for some $\overline{p}_i,\overline{q}_i\in R\left[\displaystyle\frac{y_1}{a},\ldots,\frac{y_{r+s}}{a}\right]$. Since $\nu_1\displaystyle\left(\frac{y_i}{a}\right)=\nu_1(y_i)-\nu_1(a)>0$ we have $\nu_1\left(p_1\right)>0$ and $\nu_1\left(q_1\right)>0$, in particular $\nu\left(p_1\right)>0$ and $\nu\left(q_1\right)>0$. Since $\nu(q_1)>0$ and $\nu(q)=0$ we have
\[
\nu(q_0)=\nu(q+q_0-q)=\nu(q-q_1)=0
\]
and since $\nu$ is centered at $(R,\MI)$ we have $\displaystyle\frac{1}{q_0}\in R$, and consequently $\displaystyle\frac{p_0}{q_0}\in R$. Also,
\[
\frac{p}{q}=\frac{p_0+p_1}{q_0+q_1}=\frac{p_0}{q_0}+\frac{p_1q_0-p_0q_1}{q_0q}=:\frac{p_0}{q_0}+p^{(1)}.
\]
Since $\nu(q)=0=\nu(q_0)$ we have $\nu(q_0q)=0$ so $p^{(1)}\in R^{(1)}$. On the other hand,
\[
\nu_1(p_1q_0-p_0q_1)\geq \min\{\nu_1(p_1q_0),\nu_1(p_0q_1)\}>0
\]
and we have $r^{(1)}\in \PI^{(1)}$. Therefore,
\[
\frac{p}{q}+\PI^{(1)}=\frac{p_0}{q_0}+\PI^{(1)}=\phi\left(\frac{p_0}{q_0}\right)
\]
so $\phi$ is surjective. The fact that $\ker\phi=\PI$ is trivial because $\PI=R\cap \PI^{(1)}$. Therefore, $R^{(1)}/\PI^{(1)}\cong R/\PI$.

Since $R^{(1)}/\PI^{(1)}\cong R/\PI$ then $R^{(1)}/\PI^{(1)}$ is regular and the images
$$
x_1+\PI^{(1)},\ldots,x_t+\PI^{(1)}\in R^{(1)}/\PI^{(1)}
$$
of $x_1+\PI,\ldots,x_t+\PI\in R/\PI$ under $\phi$ form a regular system of parameters for $\MI^{(1)}/\PI^{(1)}$.

\item[(ii)]
Proving that $R_\PI=R^{(1)}_{\PI^{(1)}}$ is similar to the proof of Lemma \ref{Le_4}. Namely, the inclusion $R_\PI\subset R^{(1)}_{\PI^{(1)}}$ is obvious and the opposite inclusion follows from Lemma \ref{localization} (1), since $\displaystyle\frac{y_i}a\in R_\PI$.
 
 Since $\nu_1(a)=0$ the element $a$ is a unit in $R^{(1)}_{\PI^{(1)}}$. From this and from the fact that $(y_1,\ldots,y_r)$ is a regular system of parameters for $R_\PI$ we conclude that $\left(y_1^{(1)},\ldots,y_r^{(1)}\right)=\left(\displaystyle\frac{y_1}{a},\ldots, \frac{y_r}{a}\right)$ is a regular system of parameters for $R^{(1)}_{\PI^{(1)}}$.

\end{description}
\end{proof}

\subsection{The local uniformization problem}

We will now give the different definitions of local uniformization for a valuation centered in some Noetherian local ring.

\begin{Def}[\textbf{Local Uniformization Property}]
Let $(R,\MI)$ be a Noetherian local ring and $\nu$ a valuation of $K=Quot(R)$ centered at $\left(R,\MI\right)$. We say that $\nu$ admits \textbf{local uniformization} (or that $\nu$ has the \textbf{local uniformization property}) if there exists a sequence of local blowing ups
\[
\left(R,\MI\right)\lra\left(R^{(1)},\MI^{(1)}\right)\lra\cdots\lra\left(R^{(n)},\MI^{(n)}\right)
\]
with respect to $\nu$ such that $\left(R^{(n)},\MI^{(n)}\right)$ is regular.
\end{Def}

\begin{Def}[\textbf{Weak Embedded Local Uniformization Property}]
Let $\left(R,\MI\right)$ be a Noetherian local ring with quotient field $K$ and take a valuation $\nu$ of $K$ centered at $\left(R,\MI\right)$. We say that $\nu$ admits \textbf{weak embedded local uniformization} if for every given $f_1,\ldots,f_q\in R$ there exists a sequence of local blowing ups
\[
\left(R,\MI\right)\lra\left(R^{(1)},\MI^{(1)}\right)\lra\cdots \lra\left(R^{(n)},\MI^{(n)}\right)
\]
with respect to $\nu$ such that $\left(R^{(n)},\MI^{(n)}\right)$ is regular and there exists a regular system of parameters $u=\left(u_1,\ldots,u_d\right)$ of $R^{(n)}$ such that $f_i$ are monomials in $u$ for $1\leq i\leq q$.
\end{Def}

\begin{Def}[\textbf{Embedded Local Uniformization Property}]
Let $\left(R,\MI\right)$ be a Noetherian local ring with quotient field $K$ and take a valuation $\nu$ of $K$ centered at $\left(R,\MI\right)$. We say that $\nu$ admits \textbf{embedded local uniformization} if for every given finite set $f_1,\ldots,f_q\in R$ with $\nu\left(f_1\right)\leq \nu\left(f_2\right)\leq\ldots\leq \nu\left(f_q\right)$ there exists a sequence of local blowing ups
\[
\left(R,\MI\right)\lra\left(R^{(1)},\MI^{(1)}\right)\lra\cdots \lra\left(R^{(n)},\MI^{(n)}\right)
\]
with respect to $\nu$ such that $\left(R^{(n)},\MI^{(n)}\right)$ is regular and there is a regular system of parameters $u=\left(u_1,\ldots,u_d\right)$ of $R^{(n)}$ such that all $f_i$'s are monomials in $u$ and $f_1\mid f_2\mid \ldots\mid f_q$.
\end{Def}

\section{Proofs of the main results}
We will now prove the main results of this paper.

\subsection{Proof of Theorem \ref{Teo_1}}

We will proceed by induction on the rank of the valuation. Let $n$ be a given natural number and assume that every valuation $\mu$ centered at a Noetherian local ring
$$
\left(R',\MI'\right)\in\mathcal{M}
$$
with $rk\left(\mu\right)<n$ admits local uniformization. Take a valuation $\nu$ centered at a Noetherian local ring $\left(R,\MI\right)\in\mathcal{M}$ such that $rk(\nu)=n$. We will  prove that $\nu$ admits local uniformization.

Write $\nu=\nu_1\circ\nu_2$ with $rk\left(\nu_1\right)<rk\left(\nu\right)$ and $rk\left(\nu_2\right)<rk\left(\nu\right)$. Then $\nu_1$ is a valuation of $K$ with center $\mathfrak{p}\subseteq \MI$ in $R$ (so $\nu_1$ is centered at $R_\PI$) and $\nu_2$ is a valuation of $K_{\nu_1}$ whose restriction to $\kappa\left(\mathfrak p\right)$ is centered at $\left( R/\mathfrak p,\mathfrak m/\mathfrak p\right)$ (see Lemma \ref{Le_2} above).

Since $rk\left(\nu_1\right)<rk\left(\nu\right)$, by the induction assumption, there exists a sequence of local blowing ups
\[
R_\PI\lra \widetilde{R}^{(1)}\lra\cdots\lra \widetilde{R}^{(r)}
\]
with respect to $\nu_1$ such that $\widetilde{R}^{(r)}$ is regular. From Corollary \ref{Cor_2} we conclude that there exists a sequence of local blowing ups
\[
R\lra R^{(1)}\lra\cdots\lra R^{(n)}
\]
with respect to $\nu$ such that $R^{(n)}_{\PI^{(n)}}$ is regular, where $\PI^{(n)}$ is the center $\MI_{\nu_1}\cap R^{(n)}$ of $\nu_1$ on $R^{(n)}$. Replacing $R^{(n)}$ by $R$, we may assume that $R_\mathfrak{p}$ is regular.

Next, we apply the induction assumption to $R/\mathfrak{p}$. Since $rk(\nu_2)<rk(\nu)=n$ there exists a sequence of local blowing ups
\[
\left(R/\PI,\MI/\PI\right)\lra \left(\overline{R}^{(1)},\overline{\MI}^{(1)}\right)\lra\cdots\lra \left(\overline{R}^{(m)},\overline{\MI}^{(m)}\right)
\]
with respect to $\nu_2$ such that $\left(\overline{R}^{(m)},\overline{\MI}^{(m)}\right)$ is regular. By Corollary \ref{Cor_1} there exists a sequence of local blowing ups
\[
(R,\MI)\lra \left(R^{(1)},\MI^{(1)}\right)\lra\cdots\lra \left(R^{(m)},\MI^{(m)}\right)
\]
with respect to $\nu$ such that $R^{(m)}_{\PI^{(m)}}$ and $R^{(m)}/{\PI^{(m)}}$ are regular. Replacing $R$ by $R^{(m)}$, we may assume that both $R_\mathfrak{p}$ and $R/\mathfrak{p}$ are regular.

Let $(y_1,\ldots,y_r)\subseteq\PI$ be a regular system of parameters for $\mathfrak{p}R_\PI$ (we can take $y_i\in \PI$ by Lemma \ref{Le_5}), and $x_1,\ldots, x_t$ a set of elements of $R\setminus\mathfrak{p}$, whose images modulo $\mathfrak{p}$ form a regular system of parameters of $\mathfrak{m}/\mathfrak{p}$. If $y_1,\ldots,y_r$ generate $\PI$ then $R$ is regular. Indeed, since $y_1,\ldots,y_r,x_1,\ldots,x_t$ generate $\MI$ we have $r+t\geq \dim R$. Also, since $r= \dim R_\mathfrak{p}=ht\left(\PI\right)$ and $t=\dim \left(R/\PI\right)=ht\left(\MI/\PI\right)$ we have
\[
\dim R=ht\left(\MI\right)\geq ht\left(\PI\right)+ht\left(\MI/\PI\right)=r+t\geq \dim R.
\]
Therefore, $r+t=\dim R$ and $x_1,\ldots,x_t,y_1,\ldots,y_s$ is a minimal set of generators of $\MI$, hence $\left(R,\MI\right)$ is regular.

If $y_1,\ldots,y_r$ do not generate $\PI$, take $y_{r+1},\ldots,y_{r+s}\in \PI$ such that $y_1,\ldots,y_r,y_{r+1},\ldots,y_{r+s}$ generate $\PI$. Since the residues of $y_1,\ldots, y_r$ modulo $\left(\PI R_\PI\right)^2$ form a $\kappa\left(\PI\right)$-basis of $\PI R_\PI/\left(\PI R_\PI\right)^2$, for each $k=1,\ldots, s$ we can find an equation
\[
a_{k}y_{r+k}+b_{1k}y_1+\ldots +b_{rk}y_r-h_k=0
\]
where $a_k\in R\setminus \PI$ and $h_k\in \left(\PI R_\PI\right)^2$. In fact, multiplying the above equations by suitable elements of $R\setminus\PI$, we may assume that
\begin{equation}
h_k\in \left(y_1,\ldots ,y_r\right)^2,\qquad 1\leq k\leq s.\label{eq:hinysquare}
\end{equation}
First, let us blow up $R$ with respect to $\nu$ along the ideal $\left(a_1,y_1,\ldots,y_r\right)$ obtaining a new local ring $\left(R^{(1)},\MI^{(1)}\right)$. In $R^{(1)}$ we have $y_1=a_1y_1^{\left(1\right)}, y_2=a_1y_2^{\left(1\right)},\ldots, y_r=a_1y_r^{\left(1\right)}$ and we rewrite the previous relations as
\[
a_{k}y_{r+k}+a_1b_{1k}y^{(1)}_1+\ldots +a_1b_{rk}y^{(1)}_r-h_k=0,\qquad 1\leq k\leq s
\]
Observe that by (\ref{eq:hinysquare}) we have $h_k\in a_1^2\left(y_1^{\left(1\right)},\ldots,y_r^{\left(1\right)}\right)^2$. In particular, we have $a_1^2\mid h_1$ in $R^{(1)}$ and we obtain
\[
a_1\left(y_{r+1}^{\left(1\right)}+b_{11}y_1^{\left(1\right)}+\ldots+b_{r1}y_r^{\left(1\right)}-h_1'\right)=0
\]
and
\[
a_{k}y_{r+k}+a_1b_{1k}y^{(1)}_1+\ldots +a_1b_{rk}y^{(1)}_r-h_k=0
\]
for $k>1$, where $h_1=a_1h_1'$ with $h_1',h_2,\ldots,h_s\in \left(y_1^{(1)},\ldots,y_r^{(1)}\right)^2$. In particular,
\begin{equation}\label{2}
y_{r+1}^{\left(1\right)}+b_{11}y_1^{\left(1\right)}+\ldots+b_{r1}y_r^{\left(1\right)}-h_1'=0
\end{equation}
Since $h'_1\in\left(y_1^{(1)},\ldots,y_r^{(1)}\right)$ we have $y_{r+1}\in \left(y_1^{(1)},\ldots,y_r^{(1)}\right)$ and consequently
\[
\PI^{(1)}=\left(y_{r+2},\ldots,y_{r+s},y_1^{(1)},\ldots,y_r^{(1)}\right).
\]
By Lemma \ref{Le_6}, $\left(x_1+\PI^{(1)},\ldots,x_t+\PI^{(1)}\right)$ is a regular system of parameters for $R^{(1)}/\PI^{(1)}$ and $\left(y^{(1)}_1,\ldots,y^{(1)}_r\right)$ is a regular system of parameters for $R^{(1)}_{\PI^{(1)}}$.

We proceed as before with $a_k$ for all $k=2,\ldots,s$ until we reach a local ring $R^{(s)}$ for which $\PI^{(s)}=\left(y_1^{(s)},\ldots,y_r^{(s)}\right)R^{(s)}$, $\left(x_1+\PI^{(s)},\ldots,x_t+\PI^{(s)}\right)$ is a regular system of parameters for $R^{(s)}/\PI^{(s)}$ and $\left(y_1^{(s)},\ldots,y_r^{(s)}\right)$ is a regular system of parameters for $R^{(s)}_{\PI^{(s)}}$. Therefore, $R^{(s)}$ is regular with regular system of parameters $\left(x_1,\ldots,x_t,y^{(s)}_1,\ldots,y^{(s)}_r\right)$.

\subsection{Proof of Theorem \ref{Teo_2}}

We will proceed as before. Let $\nu$ be a valuation centered at $\left(R,\MI\right)$ with $rk\left(\nu\right)>1$, decompose it as $\nu=\nu_1\circ \nu_2$ and let $\PI$ be the center of $\nu_1$ on $R$. We want to prove that given $f_1,\ldots,f_q\in R$, there exists a sequence of local blowing ups
\[
\left(R,\MI\right)\lra \left(R^{\left(1\right)},\MI^{\left(1\right)}\right)\lra\cdots\lra\left(R^{(m)},\MI^{(m)}\right)
\]
with respect to $\nu$ such that $\left(R^{\left(m\right)},\MI^{\left(m\right)}\right)$ is regular and there exists a regular system of parameters $u^{\left(m\right)}=\left(u_1^{\left(m\right)},\ldots,u_d^{\left(m\right)}\right)$ of $\MI^{\left(m\right)}$ such that $f_1,\ldots, f_q$ are monomials in $u^{\left(m\right)}$.

By the induction hypothesis, there exists a sequence of local blowing ups
\[
R_\PI\lra \widetilde{R}^{(1)}\lra\cdots\lra \widetilde{R}^{(m)}
\]
with respect to $\nu_1$ such that $\widetilde{R}^{(m)}$ is regular and there exists a regular system of parameters $z=\left(z_1,\ldots,z_r\right)$ of $\widetilde{R}^{\left(m\right)}$ such that $f_i=c_i z^{\gamma_i}$ where $c_i$ is a unit in $\widetilde{R}^{\left(m\right)}$. From Corollary \ref{Cor_2} we conclude that there exists a sequence of local blowing ups
\[
R\lra R^{(1)}\lra\cdots\lra R^{(n)}
\]
with respect to $\nu$ such that $R^{(n)}_{\PI^{(n)}}=\widetilde{R}^{(m)}$, where $\PI^{(n)}$ is the center $\MI_{\nu_1}\cap R^{(n)}$ of $\nu_1$ on $R^{(n)}$. Replacing $R^{(n)}$ by $R$ we may assume that $R_\PI$ is regular with regular system of parameters $z$ such that $f_i=c_i z^{\gamma_i}$ with $c_i$ a unit in $R_\PI$. Writing $c_i=\displaystyle\frac{\alpha_i}{\beta_i}$ with $\alpha_i,\beta_i\in R\setminus \PI$ we get
\[
f_i=\frac{\alpha_i}{\beta_i} z^{\gamma_i},\qquad 1\leq i\leq q.
\]
Moreover, we may assume that $z_j\in \PI$. Indeed, since $z_j\in \PI R_{\PI}$ we can write $z_j=\displaystyle\frac{1}{a_j}y_j$ with $y_j\in \PI$ and $a_j\in R\setminus \PI$. We have $\PI R_\PI=\left(z_1,\ldots,z_r\right)R_{\PI}=\left(y_1,\ldots,y_r\right)R_{\PI}$ and defining $\displaystyle\beta_i'=\beta_i\prod_{j=1}^r a_j^{\gamma_i^{\left(j\right)}}$ we have
\[
f_i=\frac{\alpha_i}{\beta'_i} y^{\gamma_i},\qquad 1\leq i\leq q.
\]
Blowing up $R$ with respect to $\nu$ along the ideals $\left(\beta_i',y_1,\ldots,y_r\right)$, we may assume that $\beta_i'=1$.

From the previous paragraph we can assume that $R_\PI$ is regular and that there are $y_1,\ldots, y_r\in\PI$ that form a regular system of parameters for $R_\PI$ and there exist $\alpha_i\in R\setminus \PI$ such that
\[
f_i=\alpha_i y^{\gamma_i},\qquad 1\leq i\leq q.
\]

Extend now $\left(y_1,\ldots,y_r\right)$ to a set of generators of $\PI$, say $\left(y_1,\ldots,y_r,y_{r+1},\ldots, y_{r+s}\right)$. Since the residues of $y_1,\ldots, y_r$ modulo $\left(\PI R_\PI\right)^2$ form a $\kappa\left(\PI\right)$-basis of $\PI R_\PI/\left(\PI R_\PI\right)^2$, for each $k=1,\ldots, s$ we can find an equation
\[
a_{k}y_{r+k}+b_{1k}y_1+\ldots +b_{rk}y_r-h_k=0
\]
where $a_k\in R\setminus \PI$ and $h_k\in \left(\PI R_\PI\right)^2$. Multiplying the last equation by a suitable element of $R\setminus \PI$ we can assume that $h_k\in \left(\PI\right)^2$.

By the induction assumption for $\nu_2$, there exists a sequence of local blowing ups
\[
R/\PI\lra \overline{R}^{(1)}\lra\cdots\lra\overline{R}^{(r)}
\]
such that $\overline{R}^{(r)}$ is regular and there exists a regular system of parameters $\overline{x}=\left(\overline{x}_1,\ldots,\overline{x}_t\right)$ of $\overline{R}^{(r)}$ such that the residues of $\alpha_i$ and $a_k$ (modulo $\PI$) are monomials in $\overline{x}$, i.e.,
\[
\overline{\alpha}_i= \overline{u}_i\overline{x}^{\delta_i},\qquad 1\leq i\leq q
\]
and
\[
\overline{a}_k= \overline{v}_k\overline{x}^{\epsilon_k},\qquad 1\leq k\leq s
\]	
where $\overline{u}_i,\overline{v}_k$ are units in $\overline{R}$. By Corollary \ref{Cor_1} there exists a sequence of local blowing ups
\[
R\lra R^{(1)}\lra\cdots\lra R^{(n)}
\]
with respect to $\nu$ such that $R^{(n)}_{\PI^{(n)}}=R_\PI$ and $R^{(n)}/\PI^{(n)}\cong \overline{R}^{(r)}$ (for $x\in R^{(n)}\setminus\PI^{(n)}$ denote by $\overline{x}$ the element corresponding to $x+\PI^{(n)}$ via this isomorphism and say that $x$ represents $\overline{x}$). Choose elements $x_l,u_i, v_k\in R^{(n)}\setminus \PI^{(n)}$ that represent $\overline{x}_l,\overline{u}_i, \overline{v}_k$ respectively. Then
\[
\alpha_i=u_ix^{\delta_i}+r_i,\qquad 1\leq i\leq q
\]
and
\[
a_k=v_kx^{\epsilon_k}+s_k,\qquad 1\leq k\leq s
\]
for some $r_i,s_k\in \PI^{(n)}$.

From the last paragraphs we may assume that $R_\PI$ is regular with a regular system of parameters $y=\left(y_1,\ldots,y_r\right)$ which extends to a set of generators $\left(y_1,\ldots, y_r,y_{r+1},\ldots,y_{r+s}\right)$ of $\PI$ and there exist $x_1,\ldots,x_t\in \MI\setminus\PI$ such that their images in $R/\PI$ form a regular system of parameters of $R/\PI$ such that
\begin{equation}\label{1}
f_i=\left(u_i x^{\delta_i}+r_i\right) y^{\gamma_i},\qquad 1\leq i\leq q
\end{equation}
and
\begin{equation}\label{4}
v_k x^{\epsilon_k} y_{r+k}+b_{1k}y_1+\ldots+b_{rk}y_r+h'_k=0,\qquad 1\leq k\leq s,
\end{equation}
where $u_i,v_k$ are units in $R$, and $r_i, s_k\in\PI$ and $h'_k=h_k+s_k y_{r+k}\in \left(\PI\right)^2$.

From now on we will just blow up $R$ with respect to $\nu$ along ideals of the form $\left(x_l,y_1,\ldots, y_r\right)$ or $\left(x_l,y_1,\ldots, y_r,y_{r+s_1},\ldots,y_{r+s}\right)$ for some $1\leq s_1\leq s$. Take $l\in {1,\ldots,t}$ such that $x_l\mid x^{\delta_i}$ for some $i=1,\ldots,q$. Blowing up $R$ with respect to $\nu$ along
\[
\left(x_l,y_1,\ldots,y_r,y_{r+1},\ldots,y_{r+s}\right)
\]
we obtain a system of generators
\[
\left(x_1,\ldots,x_t,y^{\left(1\right)}_1,\ldots,y^{\left(1\right)}_r,y^{\left(1\right)}_{r+1},\ldots,y^{\left(1\right)}_{r+s}\right)
\]
of $\MI^{(1)}$ such that $y_j=x_l y^{\left(1\right)}_j$ for all $j=1,\ldots,r+s$. Substituting this new system of generators to the equations (\ref{1}) and (\ref{4}), we obtain
\begin{equation}\label{5}
f_i=\left(u_i x^{\delta_i}+x_l r^{(1)}_i\right) x_l^{|\gamma_i|}\left(y^{\left(1\right)}\right)^{\gamma_i}=\left(u_i \frac{x^{\delta_i}}{x_l}+r^{(1)}_i\right) x_l^{|\gamma_i|+1}\left(y^{\left(1\right)}\right)^{\gamma_i},\qquad 1\leq i\leq q
\end{equation}
and
\begin{equation}\label{6}
v_k x^{\epsilon^{\left(1\right)}_k} y^{\left(1\right)}_{r+k}+b_{1k}^{(1)}y^{\left(1\right)}_1+\ldots+b_{rk}^{(1)}y^{\left(1\right)}_r+h'_k=0,\qquad 1\leq k\leq s,
\end{equation}
where $r^{(1)}_i\in \PI^{\left(1\right)}$. Observe that $h'_k\in \left(y_1^{(1)},\ldots, y_r^{(1)},y^{\left(1\right)}_{r+1},\ldots,y^{\left(1\right)}_{r+k}\right)^2$. After finitely many of these local blowing ups we get a local ring $\left(R^{\left(n\right)},\MI^{\left(n\right)}\right)$ such that $\MI^{\left(n\right)}$ is generated by
\[
\left(x_1,\ldots,x_t,y^{\left(n\right)}_1,\ldots, y^{\left(n\right)}_r,y^{\left(n\right)}_{r+1},\ldots,y^{\left(n\right)}_{r+s}\right)
\]
with
\[
f_i=\left(u_i+r^{(n)}_i\right) x^{\tau_i}  \left(y^{\left(n\right)}\right)^{\gamma_i}=u_i' x^{\tau_i}  \left(y^{\left(n\right)}\right)^{\gamma_i},\qquad 1\leq i\leq q
\]
and
\begin{equation}\label{Eq_11}
v_k x^{\epsilon^{\left(n\right)}_k} y^{\left(n\right)}_{r+k}+b_{1k}^{(n)}y^{\left(n\right)}_1+\ldots+b_{rk}^{(n)}y^{\left(n\right)}_r+h'_k=0,\qquad 1\leq k\leq s
\end{equation}
with $h'_k\in \left(y_1^{(n)},\ldots, y_r^{(n)},y^{\left(n\right)}_{r+1},\ldots,y^{\left(n\right)}_{r+k}\right)^2$ and $u_i'\in \left(R^{(n)}\right)^\times$. Therefore, all $f_i$ are monomials in
\[
\left(x,y^{(n)}\right):=\left(x_1,\ldots,x_t,y^{\left(n\right)}_1,\ldots, y^{\left(n\right)}_r\right).
\]
Observe that if we blow up $R^{(n)}$ with respect to $\nu$ along ideals of the form
\begin{equation}\label{Eq_10}
\left(x_l,y^{\left(n\right)}_1,\ldots, y^{\left(n\right)}_r\right)\textnormal{ or }\left(x_l,y^{\left(n\right)}_1,\ldots, y^{\left(n\right)}_r,y_{r+s_1},\ldots, y_{r+s}\right)
\end{equation}
for some $s_1\in\left\{1,\ldots,s\right\}$ then all $f_i's$ will be monomials in $\left(x,y^{(n+1)}\right)$.

We still do not have that $R^{(n)}$ is regular. In order to obtain that, we will blow up $R^{(n)}$ with respect to $\nu$ along ideals of the form (\ref{Eq_10}). Let $x_l\mid x^{\epsilon^{\left(n\right)}_1}$ for some $1\leq l\leq t$ and blow up $R^{(n)}$ with respect to $\nu$ along the ideal
\[
\left(x_l,y_1^{(n)},\ldots,y_r^{(n)}\right).
\]
In $R^{(n+1)}$ equation (\ref{Eq_11}) for $k=1$ can be rewritten as
\begin{equation}\label{Eq_12}
v_1 x^{\epsilon^{\left(n\right)}_1} y^{\left(n+1\right)}_{r+1}+x_l\left(b_{11}^{(n)}y^{\left(n+1\right)}_1+\ldots+b_{r1}^{(n)}y^{\left(n+1\right)}_r\right)+h'_1=0.
\end{equation}
with $h'_1\in \left(y_1^{(n+1)},\ldots, y_r^{(n+1)},y^{\left(n+1\right)}_{r+1},\ldots,y^{\left(n+1\right)}_{r+k}\right)^2$. Now we blow up $R^{(n+1)}$ with respect to $\nu$ along
\[
\left(x_l,y_1^{(n+1)},\ldots,y_r^{(n+1)},y^{\left(n+1\right)}_{r+1},\ldots,y^{\left(n+1\right)}_{r+s}\right)
\]
and the equation (\ref{Eq_11}) rereads as
\[
x_l^2\left(v_1\frac{x^{\epsilon^{(n)}_1}}{x_l} y^{\left(n+2\right)}_{r+1}+b_{11}^{(n)}y^{\left(n+2\right)}_1+\ldots+b_{r1}^{(n)}y^{\left(n+2\right)}_r+h''_1\right)=0
\]
and consequently
\[
v_1 \frac{x^{\epsilon^{(n)}_1}}{x_l} y^{\left(n+2\right)}_{r+1}+b_{11}^{(n)}y^{\left(n+2\right)}_1+\ldots+b_{r1}^{(n)}y^{\left(n+2\right)}_r+h''_1=0,
\]
with $h''_1\in \left(y_1^{(n+2)},\ldots, y_r^{(n+2)},y_{r+1}^{(n+2)},\ldots,y_{r+k}^{(n+2)}\right)^2$. After finitely many of these steps we reach a ring $R^{(n+m_1)}$ where
\[
v_1 y^{\left(n+m_1\right)}_{r+1}+b_{11}^{(n)}y^{\left(n+m_1\right)}_1+\ldots+b_{r1}^{(n)}y^{\left(n+m_1\right)}_r+h^{(m_1)}_1=0,
\]
with $h^{(m_1)}_1\in \left(y_1^{(n+m_1)},\ldots, y_r^{(n+m_1)},y_{r+1}^{(n+m_1)},\ldots,y_{r+k}^{(n+m_1)}\right)^2$. It follows  now that
\[
\PI^{(n+m_1)}=\left(y_1^{(n+m_1)},\ldots, y_r^{(n+m_1)},y_{r+2}^{(n+m_1)},\ldots,y_{r+k}^{(n+m_1)}\right).
\]

Repeating the process as above for each $k=2,\ldots,s$, we reach a ring $R^{(n+m)}$ such that $\PI^{(n+m)}$ is generated by
\[
\left(y_1^{(n+m)},\ldots, y_r^{(n+m)}\right).
\]

Analogously to the proof of Theorem \ref{Teo_1}, we note that $\MI^{\left(n+m\right)}$ is generated by
\[
\left(x,y^{\left(n+m\right)}\right)=\left(x_1,\ldots,x_t,y^{\left(n+m\right)}_1,\ldots,y^{\left(n+m\right)}_r\right),
\]
so $R^{\left(n+m\right)}$ is regular with regular system of parameters $\left(x,y^{\left(n+m\right)}\right)$. Moreover, $f_i$ is a monomial on $\left(x,y^{\left(n+m\right)}\right)$ for each $i=1,\ldots,q$. Therefore, we have achieved weak embedded local uniformization for $\nu$.

\subsection{Proof of Theorem \ref{Teo_3}}

Let $\nu$ be a valuation with $rk\left(\nu\right)>1$. We want to prove that given $f_1,\ldots,f_q\in R$ such that $\nu\left(f_1\right)\leq\ldots\leq\nu\left(f_q\right)$ there exists a sequence of local blowing ups
\[
\left(R,\MI\right)\lra \left(R^{(1)},\MI^{(1)}\right)\lra\cdots\lra \left(R^{(n)},\MI^{(n)}\right)
\]
with respect to $\nu$ such that $\left(R^{(n)},\MI^{(n)}\right)$ is regular and there exists a regular system of parameters $u^{(n)}=\left(u^{(n)}_1,\ldots,u^{(n)}_d\right)$ of $\left(R^{(n)},\MI^{(n)}\right)$ such that $f_i$ is a monomial in $u^{(n)}$ for all $i=1,\ldots,q$ and $f_1\mid_{R^{(n)}}\ldots\mid_{R^{(n)}} f_q$.

Again, we will proceed by induction on the rank. Write $\nu=\nu_1\circ\nu_2$ with $rk\left(\nu_2\right)=1$. By induction hypothesis for $\nu_1$ and after changes as in Propositions \ref{Teo_1} and \ref{Teo_2} we can assume that $R_\PI$ is regular and there exists $y_1,\ldots,y_r\in\PI$ that form a regular system of parameters for $R_\PI$ such that
\[
f_i=\alpha_i y^{\gamma_i},\qquad 1\leq i\leq q
\]
with $\alpha_i\in R\setminus\PI$ and $y^{\gamma_1}\mid_R\ldots\mid_Ry^{\gamma_q}$.

We want to modify $\alpha_i$ in such a way that $\nu_2\left(\alpha_1+\PI\right)\leq\ldots\leq \nu_2\left(\alpha_q+\PI\right)$. We will do that by blowing up $R$ with respect to $\nu$ along an ideal of the form $\left(x^n,y_1,\ldots,y_r\right)$ for some $x\in R\setminus\PI$ and some $n\in \N$. Since $y^{\gamma_1}\mid_R\ldots\mid_R y^{\gamma_q}$ we have that $\gamma_1\leq \ldots\leq \gamma_r$ where $``\leq"$ is the componentwise partial order of $\left(\N\cup\{0\}\right)^r$. If $\gamma_i=\gamma_{i+1}$ for some $i=1,\ldots, q-1$ then $\nu\left(\alpha_i\right)\leq \nu\left(\alpha_{i+1}\right)$ so $\nu_2\left(\alpha_i+\PI\right)\leq\nu_2\left(\alpha_{i+1}+\PI\right)$. Fix $x\in R\setminus\PI$ such that $\nu_2\left(x+\PI\right)>0$. Since $rk\left(\nu_2\right)=1$, for each $i=1,\ldots,q-1$ such that $\gamma_i<\gamma_{i+1}$ we find $n_i\in \N$ such that
\[
\nu_2\left(\alpha_i+\PI\right)+n |\gamma_i| \nu_2\left(x+\PI\right)<\nu_2\left(\alpha_{i+1}+\PI\right)+n |\gamma_{i+1}| \nu_2\left(x+\PI\right)
\]
for all $n\geq n_i$. Choose $n\in \N$ such that $n\geq n_i$ for all $i\in\{1,\ldots,q-1\}$ with $|\gamma_i|<|\gamma_{i+1}|$. Blowing up $R$ with respect to $\nu$ along the ideal $\left(x^n,y_1,\ldots,y_r\right)$ we obtain
\[
f_i=\alpha_i' \left(y^{(1)}\right)^{\gamma_i}=\alpha_i x^{n|\gamma_i|} \left(y^{(1)}\right)^{\gamma_i}, \qquad 1\leq i\leq q.
\]
Thus,
\[
\nu_2\left(\alpha'_i+\PI^{(1)}\right)=\nu_2\left(\alpha_ix^{n|\gamma_i|}+\PI^{(1)} \right)=\nu_2\left(\alpha_i+\PI^{(1)}\right)+n |\gamma_i| \nu_2\left(x+\PI^{(1)}\right)
\]
and consequently
\[
\nu_2\left(\alpha'_i+\PI^{(1)}\right)\leq \nu_2\left(\alpha'_{i+1}+\PI^{(1)}\right) ,\qquad  1\leq i\leq q-1.
\]

From the last paragraphs, we can assume that $R_\PI$ is regular and there exist $y_1,\ldots,y_r\in \PI$ that form a regular system of parameters for $R_\PI$ such that
\begin{equation}\label{7}
f_i=\alpha_i y^{\gamma_i},\qquad 1\leq i\leq q,
\end{equation}
where $y^{\gamma_1}\mid\ldots\mid y^{\gamma_q}$ and $\nu_2\left(\alpha_1+\PI\right)\leq\ldots\leq \nu_2\left(\alpha_q+\PI\right)$.

Extend $\left(y_1,\ldots,y_r\right)$ to a set of generators
\[
\left(y_1,\ldots,y_r,y_{r+1},\ldots,y_{r+s}\right)
\]
of $\PI$. As in the proofs of Theorem \ref{Teo_1} and \ref{Teo_2}, we have relations of the form
\begin{equation}\label{8}
a_{k}y_{r+k}+b_{1k}y_1+\ldots +b_{rk}y_r-h_k=0,\qquad 1\leq k\leq s,
\end{equation}
where $a_k\in R\setminus \PI$ and $h_k\in \PI^2$.

By the induction hypothesis for $R/\PI$ and after lifting the sequence of local blowing ups as in Propositions \ref{Teo_1} and \ref{Teo_2} we can assume that there exist $x_1,\ldots,x_t\in\MI\setminus\PI$ such that their images $x_1+\PI,\ldots,x_t+\PI$ form a regular system of parameters for $R/\PI$ and we have the following relations:
\[
\alpha_i=u_i x^{\epsilon_i}+r_i,\qquad 1\leq i\leq q,
\]
and
\[
a_k=v_k x^{\delta_k}+s_k, \qquad 1\leq k\leq s,
\]
where $u_i,v_k$ are units in $R$ and $s_k,r_i\in \PI$ for $k=1,\ldots,s$ and $i=1,\ldots, r$ with $x^{\epsilon_1}\mid\ldots\mid x^{\epsilon_q}$.

Substituting $a_k$'s and $\alpha_i$'s in equations (\ref{7}) and (\ref{8}) we obtain
\[
f_i=\left(u_i x^{\epsilon_i}+r_i\right) y^{\gamma_i},\qquad 1\leq i\leq q,
\]
and
\[
v_k x^{\delta_k} y_{r+k}+b_{1k}y_1+\ldots+b_{rk}y_r-h'_k=0,\qquad 1\leq k\leq s,
\]
where $h'_k\in \PI^2$.

Blowing up $R$ with respect to $\nu$ along ideals of the form $\left(x_l,y_1,\ldots,y_r,y_{r+1},\ldots,y_{r+s}\right)$ we have new coordinates $y^{(1)}=\left(y^{(1)}_1,\ldots,y^{(1)}_r\right)$ in which $y_j=x_l y^{(1)}_j$, $j=1,\ldots,r$. Therefore,
\[
f_i=\left(u_i x^{\epsilon_i}+x_l r^{(1)}_i\right) x_l^{|\gamma_i|} \left(y^{(1)}\right)^{\gamma_i},\qquad 1\leq i\leq q,
\]
where $r'_i\in \PI'$. If $x_l\mid x^{\epsilon_i}$, this equation can be rewritten as
\[
f_i=\left(u_i \frac{x^{\epsilon_i}}{x_l}+ r^{(1)}_i\right) x_l^{|\gamma_i|+1} \left(y^{(1)}\right)^{\gamma_i},\qquad 1\leq i\leq q.
\]
After finitely many of these local blowing ups we achieve
\[
f_i=\left(u_i+r^{(n)}_i\right) x^{\delta_i} \left(y^{(n)}\right)^{\gamma_i},\qquad  1\leq i\leq q
\]
where $\delta_i^{\left(l\right)}=\epsilon_i^{\left(q\right)} |\gamma_i|+\epsilon_i^{\left(l\right)}$. Since $\epsilon_1^{\left(l\right)}\leq\ldots\leq \epsilon_q^{\left(l\right)}$ we have that $x^{\delta_1}\mid\ldots\mid x^{\delta_q}$. Therefore, we achieved that $f_1,\ldots,f_q$ are monomials in $\left(x,y^{(n)}\right)$ and that $f_1\mid\ldots \mid f_q$.

We still don't have that $\left(R^{(n)},\MI^{(n)}\right)$ is regular. We can achieve that proceeding as in Theorem \ref{Teo_2}.
Now $\left(R^{(n+m)},\MI^{(n+m)}\right)$ is regular with regular system of parameters $\left(x,y^{(n+m)}\right)$ in which $f_i$ are monomials and $f_1\mid\ldots\mid f_q$. Therefore, we achieved embedded local uniformization for $\nu$.

\smallskip

\noindent
\bf Acknowledgement. \rm The first author would like to thank to his supervisor Franz-Viktor Kuhlmann. He was the one who posted the question and encouraged to write this paper. Also, his comments and suggestions were very helpful and clarifying.

\end{document}